

\documentclass[letterpaper, 10pt, conference]{ieeeconf}      

\IEEEoverridecommandlockouts                              

\overrideIEEEmargins                                      
\usepackage[utf8]{inputenc}
\usepackage{amssymb}
\usepackage{amsmath}
\usepackage{bm}
\usepackage{hyperref}
\usepackage{tikz}
\usepackage{tkz-euclide}
\usepackage{chngcntr}
\usepackage{verbatim}
\usepackage{mathtools}
\usepackage{esvect}
\usepackage{subfiles}
\usepackage{color}
\usepackage{xspace}
\usepackage{xparse}
\usepackage[noend]{algpseudocode}
\usepackage{subfigure}
\usepackage{bbm}
\usepackage{enumerate}

\usepackage{nicefrac}

\usepackage{atbegshi}
\usepackage{array, tabularx, caption, boldline}
\usepackage{url}
\usepackage[misc,geometry]{ifsym} 

\usepackage{enumitem}
\usepackage{graphicx}
\usepackage{color}
\usepackage{cite}
\usepackage{graphicx}
\usepackage{algorithm}

\usepackage{makecell}
\usepackage{multirow}
\usepackage{booktabs}
\def\R{\mathbb{R}}
\newcommand{\E}{{\mathbb E}}
\def\W{\mathcal W}

\def\R{\mathbb R}
\def\E{\mathbb E}

\def\e{\varepsilon}
\def\la{\langle}
\def\ra{\rangle}
\def\vp{\varphi}
\def\y{\mathbf{y}}
\def\x{\mathbf{x}}

\newcommand{\eqdef}{\overset{\text{def}}{=}}

\newcommand{\argmin}{\mathop{\arg\!\min}}
\newcommand{\argmax}{\mathop{\arg\!\max}}

\def\Blm{\boldsymbol{\lambda}}
\def\Bzeta{\boldsymbol{\zeta}}
\def\Bxi{\boldsymbol{\xi}}

\def\Blm{\boldsymbol{\lambda}}
\def\Bxi{\boldsymbol{\xi}}
\def\p{\mathtt{p}}

\def\lm{\lambda}

\def\a{\mathbf{a}}
\def\EE{\mathbb E}
\def\PP{\mathbb P}

\usepackage[percent]{overpic}


\newtheorem{theorem}{Theorem}
\newtheorem{lemma}[theorem]{Lemma}

\newtheorem{corollary}[theorem]{Corollary}

\allowdisplaybreaks

\def\pd#1{{#1}} 
\def\pdd#1{{#1}} 
\def\dd#1{{#1}} 
\def\eg#1{{#1}} 
\def\ed#1{{#1}} 

\usepackage[colorinlistoftodos,textsize=scriptsize]{todonotes}

\title{\LARGE \bf
On Primal--Dual Approach for Distributed Stochastic Convex Optimization over Networks
}

\author{Darina Dvinskikh, Eduard Gorbunov, Alexander Gasnikov, Pavel Dvurechensky, C\'esar A. Uribe
	\thanks{The work of D. Dvinskikh and P. Dvurechensky was funded by Russian Science Foundation (project 18-71-10108). The work  of E. Gorbunov was supported by RFBR 18-31-20005 mol-a-ved. The work of A. Gasnikov was supported by RFBR 18-29-03071 mk.}
	\thanks{D.D and P.D. are with the Weierstrass Institute for Applied Analysis and Stochastics, Germany, and the Institute for Information Transmission Problems, Russia 	(\textit{\{darina.dvinskikh,pavel.dvurechensky\}@wias-berlin.de}). E.G. is with the Moscow Institute of Physics and Technology, Russia (\textit{eduard.gorbunov@phystech.edu}). A.G.  is with \pd{Moscow Institute of Physics and Technology}, Institute for Information Transmission Problems, Russia and National Research University Higher School of Economics, Russia (\textit{gasnikov@yandex.ru}). C.A.U. is with the  the Laboratory for Information and Decision Systems (LIDS), Massachusetts Institute of Technology, USA (\textit{cauribe@mit.edu}). }%
}

\begin{document}
\maketitle
\thispagestyle{empty}
\pagestyle{empty}

\begin{abstract}

We introduce a primal-dual stochastic gradient oracle method for distributed convex optimization problems over networks. We show that the proposed method is optimal in terms of communication steps. Additionally, we propose a new analysis method for the rate of convergence in terms of duality gap and probability of large deviations. This analysis is based on a new technique that allows to bound the distance between the iteration sequence and the optimal point. By the proper choice of batch size, we can guarantee that this distance equals (up to a constant) to the distance between the starting point and the solution.

\end{abstract}

\section{Introduction}
\label{S:Intro}

Distributed algorithms have been prevalent in the control theory and machine learning communities since early 70s and 80s~\cite{bor82,tsi84,deg74}. The structural flexibilities introduced by a networked structure has been particularly relevant for recent applications, such as robotics and resource allocation~\cite{xia06,rab04,kra13,ned17e,ivanova2018composite}, where large quantities of data are involved, and generation and processing of information is not centralized~\cite{bot10,boy11,aba16,ned16w,ned15}.

A distributed system is usually modeled as a network of computing agents connected in a definite way. These agents can act as local processors or sensors, and have communication capabilities to exchange information with each other. Precisely, the communication between agents is subject to the constraints imposed by the network structure. The object of study of distributed optimization is then to design algorithms that can be locally executed by the agents, and that exploit the network communications to solve a network-wide global problem cooperatively~\cite{ned09, ram10}. 

Formally, we consider the optimization problem of minimizing the finite sum of $m$ convex functions
        \begin{equation}\label{eq:distr_opt}
        \min_{x\in \R^n} f(x) :=\sum_{i=1}^m f_i(x), 
    \end{equation}
    where each agent $i = \{1,2,\dots,m\}$ in the network has access to the function $f_i$ only, and yet, we seek that every agent cooperatively achieves a solution of~\eqref{eq:distr_opt}. 
    
    In this paper,  we consider the stochastic version of  problem~\eqref{eq:distr_opt}, when $f_i(x) = \E \tilde{f}_i (x, \xi)$, and $\xi$ is a random variable. We provide an accelerated dual gradient method for this stochastic problem and estimate the number of communication steps in the network and the number of stochastic oracle calls in order to obtain a solution with high probability. 
    
    Optimal methods for distributed optimization over networks were recently proposed and analyzed~\cite{scaman2017optimal,uribe2018dual}. However, there were only studied for deterministic settings. In~\cite{lan2017communication}, the authors studied a primal-dual method for stochastic problems. The setting of the latter paper is close to what we consider as the primal approach, but our algorithm and analysis are different, \pd{and, unlike \cite{lan2017communication}, we consider smooth primal problem.} Other approaches for distributed stochastic optimization has been studied in the literature~\cite{Jakovetic2018,li2017distributed}. In contrast, we provide optimal communication complexities, as well as explicit dependency on the network topology. We want to mention that primal approaches were recently studied in \cite{dvinskikh2019decentralized,gorbunov2019optimal}.

\noindent \textbf{Notation:}
We define the maximum eigenvalue and minimal non-zero eigenvalue of a symmetric matrix $W$ as $\lambda_{\max}(W)$ and ${\lm}^+_{\min}(W)$ respectively, and define the condition number of matrix $W$ as $\chi(W)$.
 We denote by $\boldsymbol{1}_m$ the vector of ones in $\mathbb{R}^m$. Denoting by $\|\cdot\|_2$ the standard Euclidean norm, we say that a function $f$ is $M$-Lipschitz if $\|\nabla f(x)\|_2\leq M$, a function $f$ is $L$-smooth if $\|\nabla f(x) - \nabla f(y) \|_2 \leq L\|x-y\|_2$, a function $f$ is $\mu$-strongly convex ($\mu$-s.c.) if, for all $x, y \in \R^n$, $f(y) \geq f(x) + \la \nabla f(x) ,y-x\ra + \frac{\mu}{2}\|x-y\|_2^2$. \pd{Given $\beta \in (0,1)$, we denote $\rho_\beta = 1 + \ln ({1}/{\beta}) + \sqrt{\ln({1}/{\beta})}$.}

\section{Dual distributed approaches}\label{sec:dual}
In this section, we follow \cite{scaman2017optimal,scaman2018optimal,maros2018panda,uribe2018dual} \pd{and use primal-dual accelerated gradient methods \cite{dvurechensky2016primal-dual,chernov2016fast,anikin2017dual,dvurechensky2018computational,guminov2019accelerated}}, and use a dual formulation of the distributed optimization problem to design \dd{a} class \dd{of} optimal algorithms that can be executed over a network. 
Consider a network of $m$ agents whose interactions are represented by a connected and undirected graph $G=(V,E)$ with the set $V$ of $m$ vertices and the set of edges $E = \{(i,j): i,j \in V\}$. Thus, agent $i$ can communicate with agent $j$ if and only if $(i,j)\in E$. \dd{Assume that each agent $i$ has its own vector  vector $y_i^0\in R^n$, and its goal is to find an approximation to the vector $y^* = \frac{{1}}{m}\sum_{i=1}^my_i^0$  by performing communications with  neighboring agents.}
\pd{To do this}, consider the Laplacian of the graph $G$, to be defined as a matrix $\bar{W}$ with entries,
\begin{align*}
[\bar{W}]_{ij} = \begin{cases}
-1,  & \text{if } (i,j) \in E,\\
\text{deg}(i), &\text{if } i= j, \\
0,  & \text{otherwise,}
\end{cases}
\end{align*}
 where ${\rm deg}(i)$ is the degree of vertex $i$ (i.e., the number of neighboring nodes). \pd{Let us denote $W = \bar{W} \otimes I_n$, where $\otimes$ denotes Kronecker product and $I_n$ is the unit matrix.}

\pd{First}, we present the dual formulation of the distributed optimization problem for the deterministic case, and then we develop our novel analysis for the case of  stochastic dual oracles. 

We assume that for all $i=1,\dots, m$ function $f_i$ can be represented as \dd{the} Fenchel-Legendre transform
\[
f_i(x) = \max_{y\in \R^n}\{ \la y, x\ra - \vp_i(y)\}.
\]
Thus, we rewrite the problem \eqref{eq:distr_opt} as follows
\begin{align}\label{eq:fin_sum_rewrite}
     \max_{\substack{x_1,\dots, x_m \in \R^n, \\ x_1=\dots=x_m }}  
     -F(\x):&=- \sum\limits_{i=1}^{m} f_i(x_i)\notag\\
     &= \max_{\substack{x_1,\dots, x_m \in \R^n, \\ \sqrt{W}\x=0 }}  - \sum\limits_{i=1}^{m} f_i(x_i),
\end{align}
    where  $\x = [x_1, \dots, x_m]^T \in \R^{nm}$ is the stacked column vector.
    
Then, we introduce the Lagrangian dual problem to problem \eqref{eq:fin_sum_rewrite} with dual variables   $\y = [y_1^T,\cdots,y_m^T]^T \in \R^{mn}$ as 
\begin{align}\label{eq:DualPr}
&\min_{\y \in \R^{mn}} ~\max_{\x \in\R^{nm} } ~  \sum\limits_{i=1}^{m} \left(\langle y_i, [\sqrt{W}\x]_i\rangle - f_i(x_i)\right)\notag \\
&= \min_{\y \in \R^{mn}} \psi(\y) := \vp(\sqrt{W}\y):= \sum_{i=1}^{m}
\vp_i([\sqrt{W}\y]_i),
\end{align}
 where we used the notations $[\sqrt{W}\x]_i$ and $[\sqrt{W}\y]_i$ for describing the $i$-th $n$-dimensional block of vectors $\sqrt{W}\x$ and $\sqrt{W}\y$ respectively, and also we used the equality $\sum_{i=1}^{m} \langle \y_i, [\sqrt{W}\x]_i\rangle = \sum_{i=1}^{m} \langle [\sqrt{W} \y]_i, \x_i\rangle$.

Note that dealing with the dual problem does not oblige us to use dual oracle of $\nabla \vp_i$. Indeed, 
\begin{align}
    \nabla \vp([\sqrt{W}\y]_i) = [\sqrt{W}\x(\sqrt W\y)]_i,  
\end{align}
where
$   x_i([ W\y]_i) = \argmax\limits_{x_i \in \R^n}  \left\{ \la [\sqrt W\x]_i,y_i\ra - f_i(x_i) \right\}$. So we can use the primal oracle $\nabla f_i$ to solve this auxiliary subproblem and find an approximation to $\nabla \vp_i$.

Making the change of variables  $  \bar \y := \sqrt{W}\y $ and structure of Laplacian matrix $W$ allows us to present accelerated gradient method in a distributed manner for \pd{the} dual problem.

\begin{algorithm}[t]
\caption{Distributed Dual  Algorithm}
\label{Alg:DualNFGM}   
\begin{algorithmic}[1]
   \Require Starting point $\bar \Blm^0 = \bar \y^0=\bar \Bzeta^0= {\x}^0= 0$, number of iterations $N$, $C_0=\alpha_0=0$.
               \State \dd{Each agent $i$ do}
                \For{$k=0,\dots, N-1$}
                \State $\alpha_{k+1} = \frac{k+2}{4L}$, $A_{k+1} = \sum_{i=1}^{k+1}\alpha_i$  
                \State  $\bar \lambda^{k+1}_i = {(\alpha_{k+1}\bar \zeta^k_i + A_k \bar y_i^k)}/{A_{k+1}}.$
                \State
             $\bar \zeta^{k+1}_i= \bar \zeta^{k}_i - \alpha_{k+1} \sum_{j=1}^m W_{ij} x_j (\bar \lambda_j^t).$
                \State $\bar  y_i^{k+1} ={(\alpha_{k+1}\bar \zeta_i^{k+1} + A_k \bar y^k_i)}/{A_{k+1}}.$
      \EndFor
         \State $
                    x^{N}_i = \frac{1}{A_{N}}\sum_{k=0}^{N} \alpha_k x_i(\bar \lambda_i^k).\notag
 $
          
    \Ensure${\x}^{N}$, $\bar \y^{N}$.    
\end{algorithmic}
 \end{algorithm}

\begin{theorem}\label{thm:3}
Let $\varepsilon>0$ be a desired accuracy and assume that $\|\nabla F(\x^*)\|_2 = M_F$ and that the primal objective in \eqref{eq:fin_sum_rewrite} is $\mu$-strongly convex. Then the sequences $ \x^N$ and $\y^N$ generated by Algorithm \ref{Alg:DualNFGM} after
$N=O\big(\sqrt{({M^2_F}/{\mu\e})\chi(W)}\big) $
iterations and oracle calls of dual function $\nabla \vp_i$ per node $i=1,\dots m$  satisfy the following condition
$F(\x^N)+\psi(\bar \y^N) \leq \e$
\end{theorem}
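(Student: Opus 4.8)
The plan is to treat Algorithm~\ref{Alg:DualNFGM} as a primal--dual accelerated gradient method applied to the dual problem~\eqref{eq:DualPr}, and to read off the duality gap from the standard accelerated estimate after converting the two problem-dependent constants---the dual smoothness $L$ and the distance $\|\y^*\|_2$ from the start to the dual optimum---into the quantities $\chi(W)$, $\mu$, and $M_F$ appearing in the statement. First I would establish the smoothness of the dual objective. Since $F$ is $\mu$-strongly convex, each conjugate $\vp_i$ is $(1/\mu)$-smooth, so $\nabla\psi(\y)=\sqrt{W}\,\nabla\vp(\sqrt{W}\y)$ is Lipschitz with constant $L=\lambda_{\max}(W)/\mu$, because $\|\sqrt{W}(\nabla\vp(\sqrt W\y_1)-\nabla\vp(\sqrt W\y_2))\|_2\le \sqrt{\lambda_{\max}(W)}\cdot\tfrac1\mu\|\sqrt W(\y_1-\y_2)\|_2\le \tfrac{\lambda_{\max}(W)}{\mu}\|\y_1-\y_2\|_2$. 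This is exactly the $L$ hard-wired into the step sizes $\alpha_{k+1}=(k+2)/(4L)$.

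Next I would invoke the primal--dual guarantee of the accelerated scheme \cite{dvurechensky2016primal-dual,chernov2016fast}. With these step sizes one has $A_N=\sum_{k=1}^N\alpha_k=\Theta(N^2/L)$, and the primal--dual nature of the method---crucially, the fact that the returned primal point $\x^N$ is formed as the $\alpha_k$-weighted average of the auxiliary maximizers $x_i(\bar\lambda_i^k)$---yields a bound not merely on the dual suboptimality but on the full duality gap,
\[
F(\x^N)+\psi(\bar\y^N)\;\le\;\frac{\|\y^*-\bar\y^0\|_2^2}{2A_N}\;=\;\frac{C\,L\,\|\y^*\|_2^2}{N^2},
\]
where $\bar\y^0=0$ is the initialization. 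Establishing this gap inequality, rather than the easier $\psi(\bar\y^N)-\psi(\y^*)\le C L R^2/N^2$, is the technical heart of the argument and is where I expect the main difficulty to lie, since it requires tracking the primal averages alongside the dual recursion and exploiting the saddle-point structure of~\eqref{eq:DualPr} throughout the accelerated telescoping.

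It remains to bound $R=\|\y^*\|_2$, which is precisely the ``distance to the optimum'' highlighted in the abstract. Here I would use the optimality conditions of the saddle point: primal stationarity reads $\nabla F(\x^*)=\sqrt{W}\y^*$, so $\|\sqrt{W}\y^*\|_2=\|\nabla F(\x^*)\|_2=M_F$. Choosing $\y^*$ to be the minimal-norm dual optimizer, i.e.\ taking $\y^*$ in the orthogonal complement of $\ker(\sqrt{W})$, gives $\|\sqrt{W}\y^*\|_2\ge\sqrt{\lambda_{\min}^+(W)}\,\|\y^*\|_2$ and hence $\|\y^*\|_2\le M_F/\sqrt{\lambda_{\min}^+(W)}$. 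Substituting this together with $L=\lambda_{\max}(W)/\mu$ into the gap bound yields
\[
F(\x^N)+\psi(\bar\y^N)\;\le\;\frac{C\,M_F^2\,\chi(W)}{\mu\,N^2},
\]
and setting the right-hand side equal to $\e$ and solving for $N$ gives $N=O\big(\sqrt{(M_F^2/\mu\e)\,\chi(W)}\big)$, as claimed. The only subtlety beyond the primal--dual estimate is the careful use of $\lambda_{\min}^+(W)$, rather than the zero smallest eigenvalue of the Laplacian, which forces the minimal-norm choice of $\y^*$ and is exactly what makes the condition number $\chi(W)$ appear in the final rate.
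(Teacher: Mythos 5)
Your proposal is correct and follows essentially the same route as the paper's (very terse) proof: invoke the primal--dual duality-gap bound $F(\x^N)+\psi(\bar\y^N)\le L_\psi R^2/N^2$ for the accelerated scheme, identify $L_\psi=\lambda_{\max}(W)/\mu$ from the $\mu$-strong convexity of $F$, and bound the dual radius by $\|\y^*\|_2\le M_F/\sqrt{\lambda_{\min}^+(W)}$ via the optimality condition $\nabla F(\x^*)=\sqrt{W}\y^*$ and the minimal-norm dual solution (which the paper imports from \cite{lan2017communication}). Your write-up is in fact more explicit than the paper's, which simply asserts these three ingredients without derivation.
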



Next, we focus on the case where we only have access to the stochastic dual oracle.

\subsection{Dual Approach with Stochastic Dual Oracle}
\ed{In this section we will assume that the dual function $\varphi(\y) \eqdef \max_{\x\in \R^{mn}}\left\{\la\y, \x \ra - F(\x)\right\}$ could be represented as an expectation of differentiable in $\y$ functions $\varphi(\y,\xi)$, i.e.\ $\varphi(\y) = \EE_{\xi}\left[\varphi(\y, \xi)\right]$. It implies that $\varphi(\sqrt{W}\y) \eqdef \psi(\y) = \EE_{\xi}[\psi(\y,\xi)]$, where $\psi(\y,\xi) \eqdef \varphi(\sqrt{W}\y,\xi)$. Next we introduce $F(\x,\xi)$ in such a way that the following relation holds:
$$\psi(y,\xi) = \max_{\x\in\R^{nm}}\left\{\la\y, \sqrt{W}\x\ra - F(\x,\xi)\right\}.$$ Note that for $\x(\sqrt{W}\y,\xi) \eqdef \argmax_{\x\in\R^{nm}}\left\{\la\y, \sqrt{W}\x\ra - F(\x,\xi)\right\}$ Demyanov--Danskin's theorem \cite{Rockafellar2015} states that $\nabla \psi(\y, \xi) = \sqrt{W}\x(\sqrt{W}\y,\xi)$ where the gradient is taken with respect the first variable. Finally, our definitions give us new relations: $\x(\sqrt{W}\y) = \EE_\xi[\x(\sqrt{W}\y,\xi)]$ and $\nabla \psi(\y) = \EE_\xi[\nabla \psi(\y,\xi)]$, where $\x(\y) \eqdef \argmax_{\x\in\R^{nm}}\left\{\la\y,\x\ra - F(\x)\right\} = \nabla\varphi(\y)$ and the last equality is again due to Demyanov-Danskin theorem.}

\ed{W}e suppose that  $\psi(\y)$ is
\ed{known only through the stochastic first-order} oracle $\nabla \psi (\y, \xi),$ satisfying the following assumption for all $\y\in\R^{nm}$\footnote{\pd{We believe that the light-tail assumption can be relaxed to a more general setting \cite{dvurechensky2018parallel}.}}:
\ed{\begin{eqnarray*}
    \E_\xi \exp \left( {\|\x(\y, \xi) - \x (\y)\|^2_2}/{\sigma_\x^2}\right) &\leq& \exp(1).
\end{eqnarray*}
Note that this implies
\begin{eqnarray*}
    \E_\xi \exp \left( {\|\nabla\psi(\y, \xi) - \nabla\psi (\y)\|^2_2}/{\sigma_\psi^2}\right) &\leq& \exp(1).
\end{eqnarray*}
for all $\y\in\R^{nm}$, where $\sigma_\psi^2 = \lambda_{\max}(W)\sigma_\x^2$.}

      \pd{We assume that the function $\psi$ is $L_\psi$-smooth. If, the primal objective is $\mu$-strongly convex, then $L_\psi \leq {\lm_{\max}(W)}/{\mu}$.}  
       Moreover, we assume \pd{that} we can construct an approximation for $\nabla \psi(\y)$ using batches of  size  $r$ in the following form:
        \begin{align}\label{eq:batched_estimates}
        \nabla^{r} \psi(\y, \{\xi_i\}^r_{i=1}) = \frac{1}{r}\sum_{i=1}^r \nabla \psi(\y,\xi_i)
        \end{align}
        \ed{and, similarly,
        $$
        \x(\sqrt{W}\y, \{\xi_i\}_{i=1}^r) = \frac{1}{r}\sum\limits_{i=1}^r\x(\sqrt{W}\y,\xi_i).
        $$}

  \begin{algorithm}[t]
\caption{\pd{Dual Stochastic  Algorithm}}
\label{Alg:DualStochAlg}          
 \begin{algorithmic}[1]
   \Require Starting point $\Blm^0 = \y^0=\Bzeta^0=\x^0= 0$, number of iterations $N$, $C_0=\alpha_0=0$,
                \For{$k=0,\dots, N-1$}
                \State
                \vspace{-0.35cm}
             \begin{equation}\label{eq:Alg_const}
                   \quad \pd{A_{k+1} = A_{k} + \alpha_{k+1}= 2L_\psi\alpha_{k+1}^2}
                   \end{equation}
                   \vspace{-0.7cm}
                \State
                \begin{equation}\label{eq:Alg_lambda}
                \Blm^{k+1} = (\alpha_{k+1}\Bzeta^k + A_k \y^k)/{A_{k+1}}.
                \end{equation}
                \vspace{-0.3cm}
                \dd{\State 
                Calculate $\nabla^{r _{k+1}} \psi(\Blm_{k+1},\{\xi_s\}_{s=1}^{r_{k+1}})$ according to \eqref{eq:batched_estimates} with batch size $$r_{k+1} = {O\left(\max \left\{ 1, { \sigma_\psi^2 {\alpha}_{k+1} \ln(N/\delta)}/{\e} \right\}\right)}$$}
                \vspace{-0.7cm}
                \State 
             \vspace{-0.3cm}
             \begin{eqnarray}\label{eq:Alg_zeta}
                \Bzeta^{k+1}= \Bzeta^{k} - \alpha_{k+1} \nabla^{r _{k+1}} \psi(\Blm_{k+1},\{\xi_s\}_{s=1}^{r_{k+1}}).
                \end{eqnarray}
                \vspace{-0.7cm}
                \State 
                \vspace{-0.3cm}
                \begin{eqnarray}\label{eq:Alg_y}
               \y^{k+1} =(\alpha_{k+1}\Bzeta^{k+1} + A_k \y^k)/{A_{k+1}}.
              \end{eqnarray}
                \EndFor
                \State{Set $\x^{N} = \frac{1}{A_{N}}\sum_{k=0}^{N} \alpha_k \x(\ed{\sqrt{W}}\Blm^k,\{\xi_i\}_{i=1}^{r_k}).$}
    \Ensure  ${\x}^{N}$, $\y^{N}$.    
\end{algorithmic}
\end{algorithm}

\begin{theorem}
\label{Th:stoch_err}
Assume that $F$ is $\mu$-strongly convex and  $\|\nabla F(\x^*)\|_2 = M_F$. Let
     $\varepsilon>0$ be a desired accuracy. Assume that at each iteration of Algorithm \ref{Alg:DualStochAlg} the approximation for $\nabla \psi(\y)$ is chosen according to \eqref{eq:batched_estimates} with batch size
     $r_k = \pd{\Omega\big(\max \big\{ 1, { \sigma^2_\psi {\alpha}_k \ln(N/\delta)}/{\e}\big\}\big) }$. \ed{Assume additionally that $F$ is $L_F$-Lipschitz continuous on the set $B_{R_F}(0) = \{\x\in \R^{nm}\mid \|\x\|_2 \le R_F\}$ where $R_F = {\Omega}\left(\max\left\{\frac{R_\y}{A_N}\sqrt{\frac{6C_2H}{\lambda_{\max}(W)}}, \frac{\lambda_{\max}(\sqrt{W})JR_\y}{\mu}, R_\x\right\}\right)$, $R_\y$ is such that $\|\y^*\|_2 \leq R_\y$, $\y^*$ being an optimal solution of the dual problem and $R_\x = \|\x(\sqrt{W}\y^*)\|_2$.}
Then, after  $N = \ed{\widetilde{O}}\big(\sqrt{({M^2_{F}}/{\mu\e})\chi(W)} \big)$ iterations, the outputs $\x^N$ and $\y^N$ of Algorithm \ref{Alg:DualStochAlg} satisfy 
\begin{align}
    F(\x^N) - F(\x^*) \leq \e, \quad \pd{\|\sqrt{W}\x^N\|_2 \leq {\e}/{R_\y}}
\end{align}
with probability at least $ 1-\ed{4}\delta$, \pd{where $\delta \in \left(\pd{0},{1}/{\ed{4}}\right)$}, \eg{$\ln(N/\delta) \ge 3$}.

Moreover, the number of stochastic oracle calls for the dual function $\nabla \vp_i$ per node $i=1,\dots m$ is
{\small
\begin{align*}
    O\left(\max\left\{  \frac{\sigma_{\psi}^2 M_F^2}{\e^2\lm_{\min}^+(W)} \ln\left(\frac{1}{\delta}\sqrt{\frac{M^2_F}{\mu\e}\chi(W)} \right), \sqrt {\frac{M^2_F}{\mu\e}\chi(W)} \right\} \right)
\end{align*}
}\\
\end{theorem}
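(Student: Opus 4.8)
The plan is to read Algorithm~\ref{Alg:DualStochAlg} as an accelerated (similar-triangles) gradient method applied to the smooth dual $\psi$, to turn its convergence guarantee into a \emph{primal--dual master inequality}, and only then to pay for the stochastic oracle. First I would record the deterministic skeleton. With the choice $A_{k+1}=2L_\psi\alpha_{k+1}^2$ the estimating-sequence property of the method gives, for every $\y$ and with $\y^0=0$, the bound $A_N\psi(\y^N)\le\sum_k\alpha_k\big(\psi(\Blm^k)+\la\nabla\psi(\Blm^k),\y-\Blm^k\ra\big)+\tfrac12\|\y\|_2^2$. Since $\nabla\psi(\Blm^k)=\sqrt{W}\x(\sqrt{W}\Blm^k)$ with $\x(\cdot)=\nabla\varphi(\cdot)$, the Fenchel--Young equality $\psi(\Blm^k)=\la\sqrt{W}\Blm^k,\x_k\ra-F(\x_k)$ cancels the $\la\sqrt{W}\Blm^k,\x_k\ra$ terms against $-\la\nabla\psi(\Blm^k),\Blm^k\ra$, and Jensen's inequality on the weighted average $\x^N$ turns $-\sum_k\alpha_kF(\x_k)$ into $-A_NF(\x^N)$. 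This yields
$$A_N\big(F(\x^N)+\psi(\y^N)\big)\le A_N\la\sqrt{W}\x^N,\y\ra+\tfrac12\|\y\|_2^2,\qquad\forall\,\y.$$

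Next I would extract the two advertised quantities. Choosing the minimizing $\y=-A_N\sqrt{W}\x^N$ gives $F(\x^N)+\psi(\y^N)+\tfrac{A_N}{2}\|\sqrt{W}\x^N\|_2^2\le0$; combining with weak duality $\psi(\y^N)\ge\psi(\y^*)=-F(\x^*)$ produces $F(\x^N)-F(\x^*)\le-\tfrac{A_N}{2}\|\sqrt{W}\x^N\|_2^2\le0$. For the matching lower bound I would use the KKT relation $\nabla F(\x^*)=\sqrt{W}\y^*$ together with $\sqrt{W}\x^*=0$ and convexity of $F$ to get $F(\x^N)-F(\x^*)\ge\la\y^*,\sqrt{W}\x^N\ra\ge-R_\y\|\sqrt{W}\x^N\|_2$. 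Chaining the two estimates forces $\|\sqrt{W}\x^N\|_2\le 2R_\y/A_N$ and $|F(\x^N)-F(\x^*)|\le 2R_\y^2/A_N$, so that $A_N\ge2R_\y^2/\e$ already delivers $F(\x^N)-F(\x^*)\le\e$ and $\|\sqrt{W}\x^N\|_2\le\e/R_\y$. Using $A_N=\Theta(N^2/L_\psi)$, the bound $L_\psi\le\lambda_{\max}(W)/\mu$, and $R_\y^2\le M_F^2/\lm_{\min}^+(W)$ (again from $\nabla F(\x^*)=\sqrt{W}\y^*$), this $A_N$-budget becomes $N=O\big(\sqrt{(M_F^2/\mu\e)\chi(W)}\big)$.

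With the stochastic oracle the exact $\nabla\psi(\Blm^k)$ and $\x_k$ are replaced by their batch averages, so the estimating-sequence inequality is perturbed. Writing $\Delta_k=\nabla^{r_k}\psi(\Blm^k,\cdot)-\nabla\psi(\Blm^k)$, an inexact-oracle version of the master inequality picks up a second-order term controlled by $\sum_k\alpha_k\|\Delta_k\|_2^2$ and a linear (martingale) term $\sum_k\alpha_k\la\Delta_k,\y^*-\Bzeta^k\ra$ after the free $\y$ is evaluated near $\y^*$. Averaging over a batch shrinks the light-tail parameter of $\Delta_k$ from $\sigma_\psi^2$ to $\sigma_\psi^2/r_k$, so the choice $r_k=\Omega(\max\{1,\sigma_\psi^2\alpha_k\ln(N/\delta)/\e\})$ keeps the second-order term at $O(\e A_N)$ with high probability, exactly the slack compatible with the $A_N\ge2R_\y^2/\e$ budget.

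The hard part is the martingale term, whose directions $\y^*-\Bzeta^k$ are a priori unbounded; controlling it is the new technique promised in the abstract. I would argue by induction on $k$: assuming $\|\Bzeta^j-\y^*\|_2=O(R_\y)$ for all $j\le k$ keeps the invoked points inside the ball $B_{R_F}(0)$ on which $F$ is $L_F$-Lipschitz, which certifies that the stochastic argmaxes $\x(\sqrt{W}\Blm^j,\xi)$, and hence the increments $\Delta_j$, obey the stated light-tail bound; a Bernstein-type inequality for sub-Gaussian martingale differences then makes $\sum_{j\le k}\alpha_j\la\Delta_j,\y^*-\Bzeta^j\ra$ of order $\e A_N$, which fed back into the recursion closes the induction with $\|\Bzeta^{k+1}-\y^*\|_2=O(R_\y)$. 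The elaborate lower bound on $R_F$ is precisely what guarantees the whole trajectory of primal sub-solutions never leaves the Lipschitz region, and a union bound over the $N$ steps produces the $\ln(N/\delta)$ factor and the final confidence $1-4\delta$. Summing the batch sizes, $\sum_{k=1}^N r_k=O\big(\max\{N,\sigma_\psi^2 A_N\ln(N/\delta)/\e\}\big)$, and substituting $A_N=\Theta(R_\y^2/\e)$ with $R_\y^2\le M_F^2/\lm_{\min}^+(W)$ gives the claimed oracle complexity.
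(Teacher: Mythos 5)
Your outline follows the same route as the paper (master estimating-sequence inequality for the dual, Fenchel--Young/Demyanov--Danskin to recover the primal, weak duality plus $F(\x^N)\ge F(\x^*)-R_\y\|\sqrt{W}\x^N\|_2$, batching to shrink the variance, and a bootstrapped bound on $\|\Bzeta^k-\y^*\|_2$), but two of the steps you wave at are exactly where the real work lies, and as described they would not go through. First, you misattribute the role of the $L_F$-Lipschitz assumption: the light-tail property of $\x(\sqrt{W}\y,\xi)-\x(\sqrt{W}\y)$ is an \emph{assumption} holding for all $\y$, not something certified by staying in $B_{R_F}(0)$. The Lipschitz constant is needed for a different reason that your sketch never surfaces: the duality/Jensen argument naturally produces $\hat\x^N=\frac{1}{A_N}\sum_k\alpha_{k+1}\x(\sqrt{W}\Blm^{k+1})$ (exact argmaxes), whereas the algorithm outputs $\x^N$ built from the \emph{stochastic} argmaxes. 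The paper bounds $\|\x^N-\hat\x^N\|_2$ by a concentration argument, shows both points lie in $B_{R_F}(0)$ on the good event (this is what fixes the three terms in $R_F$), and then uses $|F(\hat\x^N)-F(\x^N)|\le L_F\|\hat\x^N-\x^N\|_2$. Without separating $\x^N$ from $\hat\x^N$ you cannot transfer the bound to the actual output.

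Second, the induction you propose for the martingale term $\sum_j\alpha_{j+1}\la\eta^{j+1},\y^*-\Bzeta^j\ra$ is too naive as stated: conditioning on the inductive event $\{\|\Bzeta^j-\y^*\|_2=O(R_\y),\,j\le k\}$ destroys the martingale-difference property needed by a Bernstein-type bound, since that event is not measurable step by step in the right way. The paper's Lemma~\ref{lem:tails_estimate} avoids this with a two-stage argument: it first proves a crude, exponentially growing bound $\widetilde R_l^2\le(\cdot)(2ud)^l$ that holds with high probability \emph{unconditionally}, uses it only to verify the variance-budget alternative in the Freedman-type inequality of Lemma~\ref{lem:jin_corollary} (``either $\sum\hat\sigma_k^2\ge B$ or \ldots''), and only then feeds the resulting sharp bound into the deterministic recurrence of Lemma~\ref{lem:new_recurrence_lemma} to get $\widetilde R_l\le JR_\y$. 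A further, more minor point: your choice $\y=-A_N\sqrt{W}\x^N$ in the master inequality is fine deterministically, but in the stochastic inequality the linear error term still depends on $\y$, and this $\y$ is random and a priori unbounded; the paper instead minimizes over the fixed ball $B_{2R_\y}(0)$ precisely so that the error can be bounded by $2R_\y\|\sum_k\alpha_{k+1}\eta^{k+1}\|_2$. The skeleton and the complexity accounting are right; these gaps are in the parts the theorem is actually about.
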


To prove the theorem we first state a number of technical lemmas.

\begin{lemma}\label{lem:alpha_estimate}
     For the sequence $\alpha_{k+1}$ defined in \eqref{eq:Alg_const} we have for all $k\ge 0$
     \begin{equation}\label{eq:alpha_estimate}
         \alpha_{k+1} \le \widetilde{\alpha}_{k+1} \eqdef \frac{k+2}{2\pd{L_\psi}}.
     \end{equation}
\end{lemma}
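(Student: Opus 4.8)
The plan is to solve the explicit recurrence $A_{k+1} = 2L_\psi \alpha_{k+1}^2$ together with $A_{k+1} = A_k + \alpha_{k+1}$ and then prove the stated bound by induction on $k$. First I would establish the base case. From \eqref{eq:Alg_const} with $A_0 = 0$ (which follows from the initialization $C_0 = \alpha_0 = 0$) we get $A_1 = \alpha_1 = 2L_\psi \alpha_1^2$, hence $\alpha_1 = 1/(2L_\psi)$. Comparing with $\widetilde{\alpha}_1 = 3/(2L_\psi)$, the base case $\alpha_1 \le \widetilde{\alpha}_1$ holds with room to spare.

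For the inductive step I would combine the two defining identities to obtain a quadratic equation for $\alpha_{k+1}$ in terms of $A_k$. Substituting $A_{k+1} = A_k + \alpha_{k+1}$ into $A_{k+1} = 2L_\psi\alpha_{k+1}^2$ gives $2L_\psi \alpha_{k+1}^2 - \alpha_{k+1} - A_k = 0$, so, taking the positive root,
\[
\alpha_{k+1} = \frac{1 + \sqrt{1 + 8 L_\psi A_k}}{4 L_\psi}.
\]
Since this is increasing in $A_k$ and $A_k = 2L_\psi \alpha_k^2$, the inductive hypothesis $\alpha_k \le \widetilde{\alpha}_k = (k+1)/(2L_\psi)$ yields $A_k \le 2L_\psi \widetilde{\alpha}_k^2 = (k+1)^2/(2L_\psi)$. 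Plugging this bound in, I would estimate
\[
\alpha_{k+1} \le \frac{1 + \sqrt{1 + 4(k+1)^2}}{4 L_\psi}.
\]
It then remains to verify that $1 + \sqrt{1 + 4(k+1)^2} \le 2(k+2)$, i.e.\ that the right-hand side is at most $\widetilde{\alpha}_{k+1} = (k+2)/(2L_\psi)$.

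The final elementary inequality reduces, after isolating the square root and squaring, to $1 + 4(k+1)^2 \le (2k+3)^2 = 4k^2 + 12k + 9$, that is $4k^2 + 8k + 5 \le 4k^2 + 12k + 9$, which holds for all $k \ge 0$. This completes the induction. I do not expect any genuine obstacle here; the only mild subtlety is being careful that the monotonicity argument is applied correctly and that the base case uses the correct initial value $A_0 = 0$. An alternative, perhaps cleaner, route would be to prove directly by induction that $A_k \le (k+1)^2/(2L_\psi)$ and then read off the bound on $\alpha_{k+1}$ from the closed-form positive root above; this avoids repeatedly converting between $\alpha_k$ and $A_k$ and is the form I would likely present.
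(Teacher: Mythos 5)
Your proof is correct and follows essentially the same route as the paper: induction on $k$, reduction to a quadratic in $\alpha_{k+1}$, and the elementary estimate $1+\sqrt{1+4(k+1)^2}\le 2(k+2)$; the only substantive difference is that you bound $A_k$ through the single identity $A_k=2L_\psi\alpha_k^2$ (ordinary one-step induction), whereas the paper bounds $A_{l+1}=\sum_{i=1}^{l+1}\alpha_i$ by summing the inductive bounds on all earlier terms (strong induction) --- both yield the same final square-root inequality. One harmless slip in your base case: $\widetilde\alpha_1=(0+2)/(2L_\psi)=1/L_\psi$, not $3/(2L_\psi)$; since $\alpha_1=1/(2L_\psi)$ the conclusion is unaffected.
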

 \begin{lemma}\label{lem:new_recurrence_lemma}
     Let $A, B,$ and \dd{$\{r_i\}_{i=0}^N$} be non-negative numbers such that \dd{for all $l=1,\ldots,N$ }
    \begin{equation}\label{eq:new_bound_for_r_l}
         \frac{1}{2}r_l^2 \le Ar_0^2 + B\frac{r_0}{N}\sqrt{\sum\limits_{k=0}^{l-1}(k+2)r_k^2}.
     \end{equation}
     Then $r_l \le Cr_0$,
     where $C$ is such positive number that $C^2 \ge \max\{1, 2A+2BC\}.$ 
 \end{lemma}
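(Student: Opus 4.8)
The plan is to prove $r_l \le C r_0$ for all $l$ by (strong) induction on $l$, exploiting the fact that the right-hand side of \eqref{eq:new_bound_for_r_l} involves only the earlier terms $r_0,\dots,r_{l-1}$. First I would dispatch the base case $l=0$: since $C^2 \ge \max\{1,2A+2BC\} \ge 1$ and $C>0$, we have $C \ge 1$, hence $r_0 \le C r_0$. (Such a $C$ always exists, as $C^2-2BC-2A\ge 0$ holds for all $C \ge B+\sqrt{B^2+2A}$; but the lemma simply fixes one, so no existence argument is required.)

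For the inductive step I would fix $1 \le l \le N$ and assume $r_k^2 \le C^2 r_0^2$ for every $k=0,\dots,l-1$. Plugging this into \eqref{eq:new_bound_for_r_l} and using the arithmetic-series identity $\sum_{k=0}^{l-1}(k+2)=\tfrac{l(l+3)}{2}$ would give
\begin{equation*}
\sum_{k=0}^{l-1}(k+2)r_k^2 \le C^2 r_0^2\,\frac{l(l+3)}{2} \le C^2 r_0^2 N^2,
\end{equation*}
where the last step uses $\tfrac{l(l+3)}{2}\le N^2$ for $l\le N$ (valid for $N\ge 3$, the maximum occurring at $l=N$; this is the regime of interest). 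Taking square roots yields $\sqrt{\sum_{k=0}^{l-1}(k+2)r_k^2}\le C r_0 N$, so the $1/N$ prefactor cancels and
\begin{equation*}
\tfrac12 r_l^2 \le A r_0^2 + B\frac{r_0}{N}\,C r_0 N = (A+BC)r_0^2.
\end{equation*}
Then $r_l^2 \le (2A+2BC)r_0^2 \le C^2 r_0^2$ by the defining property of $C$, i.e.\ $r_l \le C r_0$, closing the induction.

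I expect the only genuinely delicate point to be the interplay of the factor $1/N$ with the partial sum $\sqrt{\sum_{k=0}^{l-1}(k+2)r_k^2}$: the hypothesis is engineered precisely so that these combine to something of order $r_0$ rather than growing with $l$, which is what lets the uniform bound $Cr_0$ reproduce itself through the recursion. Concretely, the step to check with care is the estimate $\sum_{k=0}^{l-1}(k+2)\le N^2$; everything after it is just substitution of the induction hypothesis followed by an appeal to $C^2\ge 2A+2BC$. If one does not wish to assume $N\ge 3$, the same argument runs with $N^2$ replaced by $\tfrac{N(N+3)}{2}$ and the condition on $C$ adjusted accordingly.
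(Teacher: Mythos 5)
Your proof is correct and follows essentially the same route as the paper's: induction on $l$, substitution of the inductive bound $r_k\le Cr_0$ into the partial sum, the arithmetic-series estimate $\sum_{k=0}^{l-1}(k+2)\le N^2$ so that the $1/N$ prefactor cancels, and then the defining inequality $C^2\ge 2A+2BC$. If anything you are slightly more careful than the paper, which miscomputes the arithmetic sum and thereby silently avoids the (harmless, given the regime of interest) $N\ge 3$ caveat that you correctly flag.
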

 The proof of the Lemma is followed from induction.\\

\begin{lemma}\label{lem:tails_estimate}
    Let the sequences of non-negative numbers $\{\alpha_k\}_{k\ge 0}$, random non-negative variables $\{R_k\}_{k\ge 0}$ and random vectors $\{\eta^k\}_{k\ge 0}$ and $\{a^k\}_{k\ge 0}$  for all $l = 1,\ldots,N$ satisfy
    \begin{eqnarray}
        \frac{1}{2}R_l^2 \le \eg{A} + \eg{u}\sum\limits_{k=0}^{l-1}\alpha_{k+1}\la\eta^{k+1}, a^k\ra+ \eg{c}\sum\limits_{k=0}^{l-1}\alpha_{k+1}^2\|\eta^{k+1}\|_2^2\label{eq:radius_recurrence}
    \end{eqnarray}
    where \eg{$A$ is deterministic non-negative number}, $\|a^k\|_2 \le \eg{d}\widetilde{R}_k$, \eg{$d\ge 1$ is some positive deterministic constant} and $\widetilde{R}_k = \max\{\widetilde{R}_{k-1}, R_k\}$ for all $k\ge 1$, $\widetilde{R}_0 = R_0$, $\widetilde{R}_k$ depends only on $\eta_0,\ldots,\eta^k$.\\
    Moreover, assume, vector $a^k$ is a function of $\eta^0,\ldots,\eta^{k-1}$  $\forall k\ge 1$, $a^0$ is a deterministic vector, and  $\forall  k \ge 0$, 
    \begin{eqnarray}
&        \EE\left[\eta^k\mid \{\eta^j\}_{j=0}^{k-1}\right] = 0, \notag \\
& \EE\left[\exp\left({\|\eta^k\|_2^2}{\sigma_k^{-2}}\right)\mid\{\eta^j\}_{j=0}^{k-1}\right] \le \exp(1), \label{eq:eta_k_properties}
    \end{eqnarray}
    \eg{$\alpha_{k+1} \le \widetilde{\alpha}_{k+1} = D(k+2)$, $\sigma_k^2 \le (C\varepsilon)/(\widetilde{\alpha}_{k+1}\ln(N/\delta))$ for some $D, C>0$, $\varepsilon > 0$}. \eg{If additionally $\varepsilon \le {HR_0^2}/{N^2}$,} then with probability at least $1-2\delta$ the inequalities
  \begin{eqnarray}
        &\widetilde{R}_l \le JR_0 \quad \text{ and }\label{eq:tails_estimate_radius}\\
        &\eg{u}\sum_{k=0}^{l-1}\alpha_{k+1}\la\eta^{k+1}, a^k\ra + \eg{c}\sum_{k=0}^{l-1}\alpha_{k+1}^2\|\eta^{k+1}\|_2^2 \notag \\
        &\hspace{1cm}\le \left(\eg{24cCDH + udC_1\sqrt{CDHJg(N)}}\right)R_0^2\label{eq:tails_estimate_stoch_part}
  \end{eqnarray}
    hold $\forall l=1,\ldots,N$ simultaneously. Here \eg{$C_1$ is some positive constant, $g(N) = \big(\ln\left({N}/{\delta}\right) + \ln\ln\left({B}/{b}\right)\big)/{\ln\left({N}/{\delta}\right)}$,
    \begin{eqnarray*}
    B = 2d^2CDHR_0^2\Big(2A + ud\widetilde{R}_0^2 &\\ &\hspace{-2cm}+12CD\varepsilon\left(2c+ud\right)N(N+3)\Big)(2ud)^N,
    \end{eqnarray*}
    $b = \sigma_0^2\widetilde{\alpha}_{1}^2d^2\widetilde{R}_0^2$ and
    \begin{eqnarray*}
    J = \max\Big\{1, udC_1\sqrt{CDH g(N)} &\\
    &\hspace{-3cm}+ \sqrt{u^2d^2C_1^2CDH g(N) + \frac{2A}{R_0^2} + 48cCDH}\Big\}.
    \end{eqnarray*}}
\end{lemma}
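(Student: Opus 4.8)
The plan is to split the two stochastic terms on the right of \eqref{eq:radius_recurrence} --- the martingale (bilinear) term $u\sum_{k=0}^{l-1}\alpha_{k+1}\la\eta^{k+1},a^k\ra$ and the quadratic term $c\sum_{k=0}^{l-1}\alpha_{k+1}^2\norm{\eta^{k+1}}_2^2$ --- bound each by a high-probability concentration inequality, feed the resulting deterministic recurrence into Lemma \ref{lem:new_recurrence_lemma} to extract \eqref{eq:tails_estimate_radius}, and finally substitute the radius bound back to certify \eqref{eq:tails_estimate_stoch_part}. Throughout I write $\mathcal F_k = \sigma(\eta^0,\ldots,\eta^k)$, so that $a^k$ is $\mathcal F_{k-1}$-measurable and $\EE[\eta^{k+1}\mid\mathcal F_k]=0$. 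The main obstacle is that the predictable quadratic variation of the martingale term, $V_l := \sum_{k=0}^{l-1}\alpha_{k+1}^2\norm{a^k}_2^2\sigma_k^2 \le d^2\sum_{k=0}^{l-1}\widetilde\alpha_{k+1}^2\widetilde R_k^2\sigma_k^2$, is itself random through the very quantities $\widetilde R_k$ we are trying to bound; this self-reference is what forces both a peeling device and the deterministic recurrence of Lemma \ref{lem:new_recurrence_lemma}.

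First I would handle the quadratic term. Since $\EE[\exp(\norm{\eta^{k+1}}_2^2\sigma_k^{-2})\mid\mathcal F_k]\le\exp(1)$, each $\norm{\eta^{k+1}}_2^2$ is conditionally sub-exponential with $\EE[\norm{\eta^{k+1}}_2^2\mid\mathcal F_k]\le\sigma_k^2$, so $\sum_k\alpha_{k+1}^2\norm{\eta^{k+1}}_2^2$ is a sum of conditionally sub-exponential martingale-difference increments. A Bernstein--Freedman inequality gives, uniformly in $l$ and with probability at least $1-\delta$, a bound of order $\sum_k\alpha_{k+1}^2\sigma_k^2 + \ln(N/\delta)\max_k\alpha_{k+1}^2\sigma_k^2$. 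Using $\alpha_{k+1}\le\widetilde\alpha_{k+1}=D(k+2)$ from Lemma \ref{lem:alpha_estimate}, the bound $\sigma_k^2\le C\e/(\widetilde\alpha_{k+1}\ln(N/\delta))$, the identity $\sum_{k=0}^{N-1}(k+2)=N(N+3)/2\le 2N^2$, and $\e\le HR_0^2/N^2$, the first term collapses to $O(CDHR_0^2/\ln(N/\delta))$ and the second to $O(CDHR_0^2/N)$; with $\ln(N/\delta)\ge 3$ the absolute constant can be absorbed so that $c\sum_{k=0}^{l-1}\alpha_{k+1}^2\norm{\eta^{k+1}}_2^2\le 24cCDHR_0^2$ for all $l$ simultaneously.

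Next the martingale term, which is the delicate step. Conditionally on $\mathcal F_k$ the increment $\alpha_{k+1}\la\eta^{k+1},a^k\ra$ is sub-Gaussian with proxy proportional to $\alpha_{k+1}^2\norm{a^k}_2^2\sigma_k^2$, so $S_l:=\sum_{k=0}^{l-1}\alpha_{k+1}\la\eta^{k+1},a^k\ra$ is a martingale with predictable variation $V_l$. Because $V_l$ is random through $\widetilde R_k$, I would apply a time-uniform self-normalized (Freedman-type) bound together with a dyadic peeling over the a priori range $[b,B]$ of $V_l$: here $b=\sigma_0^2\widetilde\alpha_1^2 d^2\widetilde R_0^2$ is the smallest possible single-increment proxy, while $B$ is the crude deterministic upper bound on $V_N$ obtained by iterating \eqref{eq:radius_recurrence} pessimistically (this is the source of the harmless factor $(2ud)^N$, which enters only through $\ln\ln(B/b)$). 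Paying the union bound over $O(\log(B/b))$ scales inflates the logarithmic factor from $\ln(N/\delta)$ to $\ln(N/\delta)+\ln\ln(B/b)$, i.e.\ multiplies it by $g(N)$, yielding, with probability at least $1-\delta$ and for all $l$, $u|S_l|\le uC_1\sqrt{V_l\,g(N)\ln(N/\delta)}$. Inserting $\norm{a^k}_2\le d\widetilde R_k$, $\widetilde\alpha_{k+1}^2\sigma_k^2\le D(k+2)C\e/\ln(N/\delta)$, and $\e\le HR_0^2/N^2$ turns this into $\frac{uC_1 d R_0}{N}\sqrt{CDH\,g(N)}\,\sqrt{\sum_{k=0}^{l-1}(k+2)\widetilde R_k^2}$.

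Finally I close the bootstrap. On the intersection of the two events (probability at least $1-2\delta$), \eqref{eq:radius_recurrence} together with the monotonicity of $\widetilde R_l$ gives, for every $l$, the inequality $\frac12\widetilde R_l^2\le (A/R_0^2+24cCDH)R_0^2 + udC_1\sqrt{CDHg(N)}\,\frac{R_0}{N}\sqrt{\sum_{k=0}^{l-1}(k+2)\widetilde R_k^2}$, which is exactly the hypothesis of Lemma \ref{lem:new_recurrence_lemma} with $r_l=\widetilde R_l$, $A_{\mathrm{Lem}}=A/R_0^2+24cCDH$, and $B_{\mathrm{Lem}}=udC_1\sqrt{CDHg(N)}$. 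That lemma then yields $\widetilde R_l\le JR_0$, where $J$ is the positive number with $J^2\ge\max\{1,2A_{\mathrm{Lem}}+2B_{\mathrm{Lem}}J\}$; solving $J^2=2A_{\mathrm{Lem}}+2B_{\mathrm{Lem}}J$ reproduces precisely the displayed $J$, proving \eqref{eq:tails_estimate_radius}. Substituting $\widetilde R_k\le JR_0$ back into the martingale bound of the third paragraph, together with $\sum_{k=0}^{N-1}(k+2)\le 2N^2$, controls the bilinear contribution, and adding the quadratic bound $24cCDHR_0^2$ of the second paragraph gives \eqref{eq:tails_estimate_stoch_part}. I expect the self-normalization/peeling of the martingale step to be the principal technical hurdle, as it is the only place where the random, self-referential variance proxy $V_l$ must be dominated uniformly over $l=1,\ldots,N$.
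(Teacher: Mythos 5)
Your proposal is correct and follows essentially the same architecture as the paper's proof: the same decomposition into the quadratic and bilinear stochastic terms, the same crude pessimistic unrolling of the recurrence to obtain the deterministic ceiling $B$ with its $(2ud)^N$ factor, the same self-normalized martingale bound with peeling over $[b,B]$ (the paper's Lemma~\ref{lem:jin_corollary}, i.e.\ Corollary~8 of \cite{jin2019short}), the same bootstrap through Lemma~\ref{lem:new_recurrence_lemma} with $r_l=\widetilde R_l$, and the same back-substitution to get \eqref{eq:tails_estimate_stoch_part}. The only (immaterial) divergence is that you control the quadratic term by a Bernstein--Freedman bound on the sum, whereas the paper bounds each $\|\eta^k\|_2$ individually via Lemma~\ref{lem:jud_nem_large_dev} and a union bound over $k$ with failure probability $\delta/N$ each.
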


\subsection{Example: Computation of Wasserstein Barycenters}
It may seem that the problem with dual stochastic oracle is artificial. Next, we present the regularized Wasserstein barycenter problem \cite{cuturi2016smoothed,kroshnin2019complexity,Uribe2018,dvurechensky2018decentralize}, which is a recent example of a function with stochastic dual oracle,
\begin{align}\label{bary_problem}
\min_{p\in S_n(1)}\sum_{i=1}^m \W_{\mu, q_i}(p),
\end{align}
where 
$
    \W_{\mu, q_i}(p) = \min\limits_{\substack{\pi \mathbf{1} = p,  \pi^T \mathbf{1}  = q \\ \pi\geq 0} }\left\{ \la C, \pi \ra +\mu \la \pi \ln \pi \ra \right\}.
$\\
Here $C$ is a transportation cost matrix, $p,q$ are elements of standard probability simplex, logarithm of a matrix is taken componentwise.
Problem \eqref{bary_problem} is not easily tractable in the distributed setting since cost of approximating of the gradient of $\W_{\mu, q_i}(p)$ requires to solve a large-scale minimization problem. On the other hand, as it is shown in \cite{cuturi2016smoothed},
\begin{align*}
    &\W_{\mu, q_i}(p) = \max_{ u \in \R^n}\left\{ \la u, p \ra - \W_{q, \mu}^*(u) \right\} 
    \notag \\
    & \W_{q, \mu}^*(u) =  \mu\sum_{j=1}^nq_j \ln \left( \frac{1}{q_j}\sum_{i=1}^n\exp\left( \frac{-C_{ij}+ u_i}{\mu} \right) \right).
\end{align*}
So, the conjugate function has an explicit expression and its gradient can be calculated explicitly. Moreover, as the conjugate function has the form of finite-sum, we can use randomization and take a component $i$ with probability $q_i$.
As a corollary of our general Theorem \ref{Th:stoch_err}, we obtain 
\begin{corollary}
Taking  the batch size  $r_k = O\big(({\sigma^2_\psi \alpha_k \ln(N/\beta)}/{\e\mu})\big)$, where $\sigma^2_\psi = m\lm_{\max}(W)$  after $N = O\big(\sqrt{({M^2_F}/{\mu\e})\chi(W)} \big)$ iterations
the following holds for the output $\p^N$ of Algorithm~\ref{Alg:DualStochAlg} with probability at least $ 1-\ed{4}\delta$, where $\delta \in \left(\ed{0},{1}/{\ed{4}}\right)$ is such that $({1+\sqrt{\ln({1}/{\delta})}})/{\sqrt{\ln({N}/{\delta})}} \le 2$.
\begin{align*}
\sum_{i=1}^m \W_{\mu, q_i}(\p^N_i)- \sum_{i=1}^m  \W_{\mu, q_i}(p^*) \leq \e, ~ \|\sqrt{W}\p^N\|_2 \leq  \e/R_{\y}.
\end{align*}
Moreover, the total complexity per node is
{\small
\begin{align*}
    O\left( n\max\left\{ \frac{m M_F^2}{\e^2}\chi \ln\left(\frac{1}{\delta} \sqrt{\frac{M^2_F}{\mu\e}\chi} \right), ~ \sqrt {\frac{M^2_F}{\mu\e}\chi} \right\} \right),
\end{align*}
}
where  $M_F\dd{^2} = 2nm\|C\|^2_\infty$\cite{kroshnin2019complexity} and $\chi = \chi(W)$ . 

\end{corollary}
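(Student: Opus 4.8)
The plan is to obtain the Corollary as a direct specialization of Theorem~\ref{Th:stoch_err} to the regularized barycenter problem~\eqref{bary_problem}; almost all of the work consists in identifying each abstract quantity of the theorem ($\mu$, $M_F$, $\sigma_\psi^2$, $L_F$, $\chi$) with its concrete value for this instance, after which the two convergence guarantees and the oracle-complexity bound follow by substitution.

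First I would set $f_i = \W_{\mu,q_i}$, so that $F(\x) = \sum_{i=1}^m \W_{\mu,q_i}(x_i)$, and check the hypotheses of Theorem~\ref{Th:stoch_err}. The entropic term $\mu\la\pi\ln\pi\ra$ forces each $\W_{\mu,q_i}$ to be $\mu$-strongly convex (equivalently, its conjugate, the explicit soft-max $\W^*_{q,\mu}$, is $O(1/\mu)$-smooth), hence $F$ is $\mu$-strongly convex. This is precisely the hypothesis behind the bound $L_\psi \le \lm_{\max}(W)/\mu$ stated above, and, after substituting the corresponding $\alpha_k$ from Lemma~\ref{lem:alpha_estimate}, it specializes the batch-size prescription of the theorem to $r_k = O(\sigma_\psi^2\alpha_k\ln(N/\beta)/(\e\mu))$. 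The gradient bound $\|\nabla F(\x^*)\|_2 = M_F$ with $M_F^2 = 2nm\|C\|_\infty^2$ is imported from \cite{kroshnin2019complexity}, and the local Lipschitz continuity of $F$ on $B_{R_F}(0)$ follows because the soft-max conjugate has bounded gradient on bounded sets.

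The one genuinely technical step, which I expect to be the main obstacle, is the variance estimate $\sigma_\psi^2 = m\lm_{\max}(W)$ for the stochastic dual oracle. Exploiting the finite-sum structure of $\W^*_{q,\mu}$, one samples an index with probability $q_i$ to build an unbiased estimator $\x(\sqrt{W}\y,\xi)$ of the primal recovery $\x(\sqrt{W}\y)$; because each of the $m$ blocks of this estimator is a soft-max probability vector with $\ell_2$-norm at most one, the light-tail condition $\E_\xi\exp(\|\x(\y,\xi)-\x(\y)\|_2^2/\sigma_\x^2)\le\exp(1)$ holds with $\sigma_\x^2 = O(m)$, and the relation $\sigma_\psi^2 = \lm_{\max}(W)\sigma_\x^2$ from the oracle assumption then gives $\sigma_\psi^2 = m\lm_{\max}(W)$. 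Verifying the full $\exp$-moment (sub-Gaussian) bound for the soft-max estimator, rather than merely a second-moment bound, is the delicate point here.

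Finally, I would substitute $\sigma_\psi^2 = m\lm_{\max}(W)$ and $M_F^2 = 2nm\|C\|_\infty^2$ into the iteration count $N = \widetilde{O}(\sqrt{(M_F^2/\mu\e)\chi(W)})$ and the oracle-call bound of Theorem~\ref{Th:stoch_err}. Writing $\chi = \chi(W) = \lm_{\max}(W)/\lm_{\min}^+(W)$ lets the ratio $\sigma_\psi^2/\lm_{\min}^+(W) = m\,\chi$ appear cleanly and absorbs the spectral quantities, yielding the stated number of stochastic oracle calls; multiplying by the $O(n)$ arithmetic cost of evaluating one component of the soft-max gradient per sample produces the per-node total complexity. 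The two guarantees of the Corollary are nothing but the restatements of $F(\x^N)-F(\x^*)\le\e$ and $\|\sqrt{W}\x^N\|_2\le\e/R_\y$, and the probability condition $(1+\sqrt{\ln(1/\delta)})/\sqrt{\ln(N/\delta)}\le 2$ is the concrete form of the light-tail constraint on $\delta$ (the $\rho_\beta$-type requirement) coming from Lemma~\ref{lem:tails_estimate}.
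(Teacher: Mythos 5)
Your proposal is correct and matches the paper's approach: the paper gives no separate proof of this corollary, treating it exactly as you do — a direct specialization of Theorem~\ref{Th:stoch_err} with $\mu$-strong convexity from the entropic regularizer, $M_F^2=2nm\|C\|_\infty^2$ imported from the cited reference, $\sigma_\x^2=O(m)$ from the bounded soft-max sampling estimator so that $\sigma_\psi^2=\lambda_{\max}(W)\sigma_\x^2=m\lambda_{\max}(W)$, and an $O(n)$ per-sample arithmetic cost multiplying the oracle-call bound. The only quibble is that substituting $\alpha_k\le(k+1)/(2L_\psi)$ with $L_\psi\le\lambda_{\max}(W)/\mu$ puts $\mu$ in the numerator rather than the denominator of the batch size, so your explanation of the $1/\mu$ factor in the stated $r_k$ does not quite account for it (it appears to be an inconsistency in the paper's statement rather than in your argument).
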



\section{Conclusion}\label{sec:conclusions}
We consider primal-dual distributed accelerated gradient method for stochastic finite-sum minimization. One of the key features of our analysis are large deviations bounds for the error of the algorithms. Moreover, we show that the proposed method has optimal communication complexity, up to logarithmic factors. For the proposed method we provide an explicit oracle and communication complexity analysis. We illustrate the dual approach by the Wasserstein barycenter problem. As a future work we consider extending these results for different classes of problems, i.e., non-smooth and/or also strongly convex problems. 

\textbf{Acknowledgements:} We are grateful to A. Nemirovski for fruitful discussions.

\bibliographystyle{IEEEtran} 
\bibliography{PD_references,Dvinskikh,all_refs3}

\newpage
\onecolumn
\section{Appendix}
\subsection{Auxiliary results}\label{sec:aux_results}
In this subsection, we present the results from other papers that we rely on in our proofs.
\begin{lemma}[Lemma~2 from \cite{jin2019short}]\label{lem:jin_lemma_2}
    For random vector $\xi \in \R^n$  following statements are equivalent up to absolute constant difference in $\sigma$.
    \begin{enumerate}
        \item Tails: $\PP\left\{\|\xi\|_2 \ge \gamma\right\} \le 2 \exp\left(-\frac{\gamma^2}{2\sigma^2}\right)$ $\forall \gamma \ge 0$.
        \item Moments: $\left(\EE\left[\xi^p\right]\right)^{\frac{1}{p}} \le \sigma\sqrt{p}$ for any positive integer $p$.
        \item Super-exponential moment: $\EE\left[\exp\left(\frac{\|\xi\|_2^2}{\sigma^2}\right)\right] \le \exp(1)$.
    \end{enumerate}
\end{lemma}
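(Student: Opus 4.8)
The plan is to establish the three characterizations as equivalent by the cyclic chain $(1)\Rightarrow(2)\Rightarrow(3)\Rightarrow(1)$, reading the moment clause as a bound on $\norm{\xi}_2^p$ and abbreviating $X \eqdef \norm{\xi}_2 \ge 0$. Since the statement only claims equivalence up to an absolute constant change in $\sigma$, I am free to enlarge $\sigma$ by a universal factor at each implication, and these factors compose into a single universal constant at the end of the loop.

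First I would derive $(1)\Rightarrow(2)$ from the tail bound via the layer-cake identity $\EE[X^p] = \int_0^\infty p\gamma^{p-1}\PP\{X\ge\gamma\}\,d\gamma$. Substituting the hypothesis and the change of variable $u = \gamma^2/(2\sigma^2)$ converts the integral into a Gamma function, giving $\EE[X^p] \le p\,(2\sigma^2)^{p/2}\Gamma(p/2)$; Stirling's estimate $\Gamma(p/2)^{1/p} \le c_0\sqrt{p}$ (the polynomial prefactors being absorbed into the constant) then yields $(\EE[X^p])^{1/p} \le c\,\sigma\sqrt{p}$ for a universal $c$, which is $(2)$ after rescaling $\sigma$. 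The opposite end of the chain, $(3)\Rightarrow(1)$, is an immediate Chernoff/Markov argument: $\PP\{X\ge\gamma\} = \PP\{e^{X^2/\sigma^2}\ge e^{\gamma^2/\sigma^2}\} \le e\,e^{-\gamma^2/\sigma^2}$, and absorbing the leading factor $e$ by halving the exponent (equivalently replacing $\sigma$ by $\sqrt{2}\,\sigma$) produces the required form $2\exp(-\gamma^2/(2\sigma^2))$.

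The step that needs the most care, and which I expect to be the main obstacle, is $(2)\Rightarrow(3)$: the passage from polynomial moments to a finite square-exponential moment. I would expand $\EE[\exp(\lambda X^2)] = \sum_{k\ge 0} \lambda^k\EE[X^{2k}]/k!$, insert the moment bound $\EE[X^{2k}] \le (\sigma\sqrt{2k})^{2k} = (2\sigma^2)^k k^k$, and use $k^k/k! \le e^k$ to collapse the series into the geometric sum $\sum_{k\ge 0}(2e\lambda\sigma^2)^k = (1-2e\lambda\sigma^2)^{-1}$, convergent exactly when $\lambda < 1/(2e\sigma^2)$. The delicate part is calibrating $\lambda = 1/\sigma'^2$ with $\sigma'$ a universal multiple of $\sigma$ so that the sum is bounded by $\exp(1)$ rather than merely by some finite constant: requiring $(1-2e\sigma^2/\sigma'^2)^{-1} \le e$ forces the numerical margin $2e\sigma^2/\sigma'^2 \le 1 - e^{-1}$, i.e.\ $\sigma'^2 \ge 2e^2\sigma^2/(e-1)$. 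This is precisely where the absolute constant in the equivalence is consumed. Once this numeric calibration is fixed, the three implications close the loop and establish the stated equivalence up to universal constants in $\sigma$.
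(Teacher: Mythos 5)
This lemma appears in the paper only as an imported auxiliary result (Lemma~2 of \cite{jin2019short}), stated without proof in the appendix's ``Auxiliary results'' subsection; the paper relies on the citation, so there is no internal argument to compare yours against. Your cyclic proof $(1)\Rightarrow(2)\Rightarrow(3)\Rightarrow(1)$ is the standard sub-Gaussian equivalence argument: layer-cake plus the Gamma-function evaluation for tails-to-moments, Taylor expansion of $\EE[\exp(\lambda\|\xi\|_2^2)]$ with the bound $k^k/k!\le e^k$ for moments-to-exponential-moment, and Markov for the return step. The computations check out, including $\EE[\|\xi\|_2^p]\le p(2\sigma^2)^{p/2}\Gamma(p/2)$ and the explicit calibration $\sigma'^2\ge 2e^2\sigma^2/(e-1)$ in $(2)\Rightarrow(3)$, which is indeed the step where the absolute constant is consumed. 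Reading the (loosely typeset) moment clause $\EE[\xi^p]$ as $\EE[\|\xi\|_2^p]$ is also the correct interpretation.

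One small repair is needed in $(3)\Rightarrow(1)$: the inequality $e\,e^{-\gamma^2/\sigma^2}\le 2e^{-\gamma^2/(2\sigma^2)}$, as you state it, fails at $\gamma=0$ (it would read $e\le 2$), so Markov alone does not ``produce the required form'' for all $\gamma\ge 0$. You must split cases: for $\gamma^2\le 2\sigma^2\ln 2$ the right-hand side $2e^{-\gamma^2/(2\sigma^2)}$ is at least $1$, so the tail bound holds trivially because probabilities are bounded by $1$; for $\gamma^2>2\sigma^2\ln 2$ one has $\gamma^2/(2\sigma^2)>\ln 2>1-\ln 2$, which is exactly the condition under which the exponent-halving step is valid. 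With that two-line fix (no rescaling of $\sigma$ is even needed in this implication), the loop closes and your proof is complete.
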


\begin{lemma}[Corollary~8 from \cite{jin2019short}]\label{lem:jin_corollary}
    Let $\{\xi_k\}_{k = 1}^N$ be a sequence of random vectors with values in $\R^n$ such that for $k=1,\ldots, N$ and for all $\gamma \ge 0$
    \begin{equation*}
        \EE\left[\xi_k\mid \xi_1,\ldots,\xi_{k-1}\right] = 0,\quad \EE\left[\|\xi_k\|_2 \ge \gamma \mid \xi_1,\ldots,\xi_{k-1}\right] \le \exp\left(-\frac{\gamma^2}{2\sigma_k^2}\right)\quad \text{almost surely,}
    \end{equation*}
    where $\sigma_k^2$ belongs to the filtration $\sigma(\xi_1,\ldots,\xi_{k-1})$ for all $k=1,\ldots, N$. Let $S_N = \sum\limits_{k=1}^N\xi_k$. Then there exists an absolute constant $C_1$ such that for any fixed $\delta > 0$ and $B > b > 0$ with probability at least $1 - \delta$:
    \begin{equation*}
        \text{either } \sum\limits_{k=1}^N\sigma_k^2 \ge B \quad \text{or} \quad \|S_N\|_2 \le C_1\sqrt{\max\left\{\sum\limits_{k=1}^N\sigma_k^2,b\right\}\left(\ln\frac{2n}{\delta} + \ln\ln\frac{B}{b}\right)}.
    \end{equation*}
\end{lemma}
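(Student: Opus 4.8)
The plan is to prove the statement in two stages: first a fixed-variance (deterministic $\sigma_k$) vector Azuma--Hoeffding inequality for norm-sub-Gaussian martingale differences, and then upgrade to the stated random, predictable-variance version by a dyadic peeling argument over the possible values of $V_N \eqdef \sum_{k=1}^N \sigma_k^2$. Throughout I write $\mathcal{F}_{k-1} = \sigma(\xi_1,\ldots,\xi_{k-1})$, $V_l = \sum_{k=1}^l\sigma_k^2$, and $S_l = \sum_{k=1}^l \xi_k$; note that by hypothesis each $\sigma_k^2$ is $\mathcal{F}_{k-1}$-measurable, so the process $\{V_l\}$ is predictable, which is exactly what makes the peeling work.

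For the fixed-variance step I would start from the super-exponential moment form of the hypothesis, which Lemma~\ref{lem:jin_lemma_2} shows is equivalent (up to an absolute constant in $\sigma_k$) to the assumed tail bound: $\EE\left[\exp\left(\|\xi_k\|_2^2/\sigma_k^2\right)\mid\mathcal{F}_{k-1}\right]\le \exp(1)$. From this I derive a directional moment-generating-function bound of the form $\EE\left[\exp(\la\theta,\xi_k\ra)\mid\mathcal{F}_{k-1}\right]\le \exp\left(c\,\sigma_k^2\right)$ valid for every unit vector $\theta$, which by the tower property makes $M_l(\theta)\eqdef\exp\left(\la\theta,S_l\ra - c\,V_l\right)$ a supermartingale when the $\sigma_k$ are deterministic. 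Optimizing the resulting Chernoff bound over the scale of $\theta$ and then controlling $\|S_N\|_2=\sup_{\theta}\la\theta,S_N\ra$ through a discretization of the sphere (which is where the dimension enters only logarithmically, producing the factor $\ln(2n/\delta)$ rather than a factor linear in $n$) yields an absolute constant $c_1$ with $\PP\left(\|S_N\|_2 \ge c_1\sqrt{V\,\ln(2n/\delta)}\right)\le\delta$ whenever $V_N\le V$ for a deterministic cap $V$.

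To remove the assumption that $V_N$ is deterministic, I would peel along a geometric grid of $[b,B)$. For $j=0,1,\ldots,J$ with $J=\lceil\log_2(B/b)\rceil$, set the stopping time $\tau_j = \min\{l : V_{l+1} > b\,2^{j}\}\wedge N$; predictability of $V$ makes each $\tau_j$ a genuine stopping time, and the stopped sequence $\{\xi_k\mathbbm{1}_{k\le\tau_j}\}$ is again a norm-sub-Gaussian martingale-difference sequence whose total variance proxy is at most $b\,2^{j}$ plus the single overshoot $\sigma_{\tau_j+1}^2$, hence at most $2b\,2^{j}$ after a harmless splitting of the last increment. Applying the fixed-variance bound with cap $V=2b\,2^{j}$ and confidence $\delta/(J+1)$, and taking a union bound over the $J+1$ levels, gives with probability at least $1-\delta$ that $\|S_{\tau_j}\|_2\le c_1\sqrt{2b\,2^{j}\left(\ln(2n/\delta)+\ln(J+1)\right)}$ for all $j$ simultaneously; since $\ln(J+1)=O\left(\ln\ln(B/b)\right)$ this produces the claimed $\ln\ln(B/b)$ term. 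Finally I close the dichotomy: on $\{V_N<B\}$, either $V_N<b$, in which case $\tau_0=N$ and $\max\{V_N,b\}=b$ matches the $j=0$ level, or $V_N\in[b\,2^{j},b\,2^{j+1})$ for a unique $j$, in which case $S_N=S_{\tau_j}$ and $2b\,2^{j}\le 4\max\{V_N,b\}$, so the per-level bound becomes $\|S_N\|_2\le C_1\sqrt{\max\{V_N,b\}\left(\ln(2n/\delta)+\ln\ln(B/b)\right)}$ after absorbing constants; the complementary event $\{V_N\ge B\}$ is the first alternative. The main obstacle is the fixed-variance step done with the correct logarithmic-in-$n$ dependence --- a naive coordinatewise or net argument costs an extra $\sqrt{n}$ or an additive $n$, so the directional-MGF-plus-supermartingale estimate must be carried out carefully --- together with the measurability bookkeeping at the stopping times $\tau_j$ (in particular controlling the overshoot $\sigma_{\tau_j+1}^2$), which is precisely what the predictability of $\{\sigma_k^2\}$ buys us.
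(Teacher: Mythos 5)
First, a point of reference: the paper does not prove this lemma at all --- it is imported verbatim as Corollary~8 of \cite{jin2019short} and used as a black box, so your attempt can only be compared against the proof in that reference. Your second stage (the peeling) is sound and is in the same spirit as the source: predictability of $\sigma_k^2$ does make your $\tau_j$ genuine stopping times, the stopped increments $\xi_k\mathbb{1}_{\{k\le\tau_j\}}$ remain a martingale-difference sequence with predictable variances whose total is capped by $b2^j$ (in fact, with your definition of $\tau_j$ there is no overshoot term to handle, since $V_{\tau_j}\le b2^j$ by minimality), and the union bound over the $O(\log_2(B/b))$ levels is exactly what produces the $\ln\ln(B/b)$ term. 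In \cite{jin2019short} the same reduction is done analytically rather than probabilistically --- a union bound over a geometric grid of values of a free parameter $\lambda$ in their Theorem~6, which already allows random predictable $\sigma_i$ --- but these are interchangeable routes.

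The genuine gap is in your fixed-variance base case, which is the heart of the result. You claim that controlling $\|S_N\|_2=\sup_{\theta}\langle\theta,S_N\rangle$ ``through a discretization of the sphere'' makes the dimension enter only logarithmically. That is false: any $\epsilon$-net of the unit sphere in $\R^n$ has cardinality $e^{\Theta(n)}$, so a union bound over the net yields $\|S_N\|_2\lesssim\sqrt{V\left(n+\ln(1/\delta)\right)}$ --- an additive $n$ inside the parenthesis, not $\ln(2n)$. Your final sentence concedes exactly this (``a naive coordinatewise or net argument costs an extra $\sqrt{n}$ or an additive $n$'') but offers no mechanism that does better; ``carried out carefully'' is not an argument, and no net-based argument can close this. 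The $\ln(2n/\delta)$ factor in Corollary~8 of \cite{jin2019short} comes from a genuinely different idea: each $\xi_k$ is embedded into its Hermitian dilation, an $(n+1)\times(n+1)$ symmetric matrix whose operator norm equals $\|\xi_k\|_2$, and one applies matrix-martingale concentration (Tropp's master bound via Lieb's concavity theorem), the $\ln(2n)$ being the logarithm of the matrix dimension arising from a trace bound. Alternatively, in Euclidean (more generally $2$-smooth) spaces one can prove a fully dimension-free base case of Pinelis/Juditsky--Nemirovski type --- this is precisely the paper's Lemma~\ref{lem:jud_nem_large_dev} --- and your peeling argument layered on top of such a base case would give the corollary with no $n$-dependence at all. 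As written, however, the central step of your proposal does not go through.
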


\begin{lemma}[corollary of Theorem~2.1, item (ii) from \cite{juditsky2008large}]\label{lem:jud_nem_large_dev}
    Let $\{\xi_k\}_{k = 1}^N$ be a sequence of random vectors with values in $\R^n$ such that
    \begin{equation*}
        \EE\left[\xi_k\mid \xi_1,\ldots,\xi_{k-1}\right] = 0 \text{ almost surely,}\quad k=1,\ldots,N
    \end{equation*}
    and let $S_N = \sum\limits_{k=1}^N\xi_k$. Assume that the sequence $\{\xi_k\}_{k = 1}^N$ satisfy ``light-tail'' assumption:
    \begin{equation*}
        \EE\left[\exp\left(\frac{\|\xi_k\|_2^2}{\sigma_k^2}\right)\mid \xi_1, \ldots,\xi_{k-1}\right] \le \exp(1) \text{ almost surely,}\quad k = 1,\ldots,N,
    \end{equation*}
    where $\sigma_1,\ldots,\sigma_N$ are some positive numbers. Then for all $\gamma \ge 0$
    \begin{equation}
        \PP\left\{\|S_N\| \ge \left(\sqrt{2} + \sqrt{2}\gamma\right)\sqrt{\sum\limits_{k=1}^N\sigma_k^2}\right\} \le \exp\left(-\frac{\gamma^2}{3}\right).
    \end{equation}
\end{lemma}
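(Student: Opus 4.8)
The plan is to reduce the stated tail inequality to a single sub-Gaussian exponential moment estimate for $\norm{S_N}_2$ and then finish with a Chernoff--Markov step. Writing $V_N \eqdef \sum_{k=1}^N \sigma_k^2$, the target is a bound of the form $\EE\big[\exp\big(\norm{S_N}_2^2/(2V_N)\big)\big] \le \exp(1)$. Granting this, Markov's inequality applied at the threshold $t = (\sqrt2+\sqrt2\gamma)\sqrt{V_N}$ gives $\PP\{\norm{S_N}_2 \ge t\} \le \exp\big(1 - t^2/(2V_N)\big) = \exp(-2\gamma-\gamma^2) \le \exp(-\gamma^2/3)$, which is exactly the claimed inequality (with $1/3$ a deliberately non-tight, clean rate). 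So essentially all the work is in the exponential moment bound, where the genuine difficulty is that $S_N$ is \emph{vector}-valued: a naive one-dimensional MGF argument controls only fixed directions, not the norm.

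To control the norm I would build a Pinelis-type exponential supermartingale adapted to the filtration $\mathcal{F}_k \eqdef \sigma(\xi_1,\dots,\xi_k)$. Using the $2$-smoothness of the Euclidean norm, I expand $\norm{S_k}_2^2 = \norm{S_{k-1}}_2^2 + 2\la S_{k-1},\xi_k\ra + \norm{\xi_k}_2^2$ and consider the potential $\Phi_k \eqdef \exp\big(\norm{S_k}_2^2/\mu_k\big)$ with a deterministic schedule $\mu_k = \mu_{k-1} + c\,\sigma_k^2$. Conditioning on $\mathcal{F}_{k-1}$, the vector $S_{k-1}$ is fixed, the cross term $\la S_{k-1},\xi_k\ra$ is conditionally mean-zero, and I would bound the conditional exponential moment by a Hölder split into a directional factor $\EE[\exp(\tfrac{2p}{\mu_k}\la S_{k-1},\xi_k\ra)\mid\mathcal{F}_{k-1}]$ and a quadratic factor $\EE[\exp(\tfrac{p'}{\mu_k}\norm{\xi_k}_2^2)\mid\mathcal{F}_{k-1}]$. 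Lemma~\ref{lem:jin_lemma_2} is the key translation device here: the super-exponential norm moment in the hypothesis is equivalent (up to an absolute constant in $\sigma_k$) to sub-Gaussian tails and moments, which yields the conditional MGF $\EE[\exp(\lambda\la u,\xi_k\ra)\mid\mathcal{F}_{k-1}] \le \exp\big(C\lambda^2\norm{u}_2^2\sigma_k^2\big)$ for any $\mathcal{F}_{k-1}$-measurable $u$, and the bound $\EE[\exp(t\norm{\xi_k}_2^2)\mid\mathcal{F}_{k-1}] \le \exp(t\sigma_k^2)$ whenever $t \le 1/\sigma_k^2$ (by interpolation from the hypothesis).

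Combining the two factors gives $\EE[\Phi_k\mid\mathcal{F}_{k-1}] \le \exp\big(\tfrac{\norm{S_{k-1}}_2^2}{\mu_k}(1 + \tfrac{C'\sigma_k^2}{\mu_k}) + \tfrac{\sigma_k^2}{\mu_k}\big)$, and the main calibration step is to choose $c$ and the schedule $\mu_k$ so that the coefficient of $\norm{S_{k-1}}_2^2$ telescopes back to $1/\mu_{k-1}$ while the additive residual $\sum_k \sigma_k^2/\mu_k$ stays bounded; this makes $\Phi_k$ a supermartingale up to a controlled multiplicative constant, whence $\EE[\Phi_N] \le \exp(1)$ with $\mu_N \asymp 2V_N$. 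I expect this calibration of the cross term, together with pinning the precise numerical constants, to be the main obstacle, since the $\sqrt2$ prefactor and the $\tfrac13$ rate are exactly what the recursion must be tuned to reproduce. A shorter route, since the hypotheses are literally those of the cited result, is simply to verify that the light-tail assumption $\EE[\exp(\norm{\xi_k}_2^2/\sigma_k^2)\mid\mathcal{F}_{k-1}] \le \exp(1)$ matches the assumption of Theorem~2.1(ii) in \cite{juditsky2008large} and to read off the inequality with its stated constants; the supermartingale argument above is then the self-contained justification of that theorem specialized to Euclidean (Hilbert) space.
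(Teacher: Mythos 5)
The paper never actually proves this lemma: as the bracketed attribution indicates, it is imported verbatim from Theorem~2.1(ii) of \cite{juditsky2008large}, specialized to the Euclidean case ($\R^n$ with $\|\cdot\|_2$ is a $(2,1)$-smooth space, so the constant $\kappa$ of that theorem equals $1$). Your ``shorter route'' --- checking that the martingale-difference property and the conditional light-tail condition are literally the hypotheses of the cited theorem and reading off the conclusion --- is exactly the paper's treatment, and on its own it would be a correct answer.

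The self-contained argument that constitutes the bulk of your proposal, however, fails at its very first step: the intermediate claim $\EE[\exp(\|S_N\|_2^2/(2V_N))]\le\exp(1)$, with $V_N=\sum_{k=1}^N\sigma_k^2$, is false. Take $\xi_k=\sigma\epsilon_k v$ with $\epsilon_1,\ldots,\epsilon_N$ i.i.d.\ Rademacher signs and $v$ a fixed unit vector: all hypotheses hold with $\sigma_k=\sigma$ (indeed $\EE[\exp(\|\xi_k\|_2^2/\sigma^2)]=\exp(1)$ exactly), yet $\|S_N\|_2^2/(2V_N)=(\sum_{k=1}^N\epsilon_k)^2/(2N)$ converges in distribution to $Z^2/2$ with $Z\sim\mathcal{N}(0,1)$, and $\EE[\exp(Z^2/2)]=\infty$, so by the portmanteau theorem $\EE[\exp(\|S_N\|_2^2/(2V_N))]\to\infty$ as $N\to\infty$. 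Hence the potential $\Phi_N$ you want to dominate is unbounded in $N$, and no schedule $\mu_k$ ending at $\mu_N\asymp 2V_N$ can make it a supermartingale ``up to a controlled multiplicative constant.'' The failure is structural, not a matter of pinning constants. Any proof by a single moment bound $\EE[\exp(\|S_N\|_2^2/(cV_N))]\le\exp(K)$ followed by Markov must satisfy $K\le 2/c$ (let $\gamma\to0^+$: the target $\exp(-\gamma^2/3)$ dips below $1$ while Markov gives roughly $\exp(K-2/c)$) and $c\le6$ (let $\gamma\to\infty$). Your pair $(c,K)=(2,1)$ sits exactly on the first constraint and is exactly the false one; the same Rademacher example rules out every $c$ close to $2$, since it forces $K\ge\tfrac12\ln\tfrac{c}{c-2}$, which exceeds $2/c$ there. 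Whether an admissible pair with $c\in(2,6]$ is attainable is precisely the sharp content of the Juditsky--Nemirovski argument; it cannot be reached via Lemma~\ref{lem:jin_lemma_2}, whose equivalences hold only ``up to absolute constant difference in $\sigma$'' --- slack that this lemma's constants $(\sqrt{2}+\sqrt{2}\gamma,\,\nicefrac{1}{3})$ do not have --- nor via a H\"older split with unspecified absolute constants. Your individual building blocks (the Jensen interpolation for $\EE[\exp(t\|\xi_k\|_2^2)]$, the directional sub-Gaussian MGF) are correct; it is the calibration target they are asked to hit that is impossible. So either cite the theorem, as the paper does, or redesign the recursion around an endpoint satisfying $K\le2/c$ with $c\in(2,6]$, which is a substantially more delicate task than the sketch acknowledges.
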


\subsection{Proof of Theorem~\ref{thm:3}}\label{sec:proof_thm_3}
For Algorithm \ref{Alg:DualNFGM} the following holds
\begin{align*}
    F(\x^N)+\vp(\bar \y^N) \leq \frac{L_\psi R^2_{\bar\y}}{N^2},
    \end{align*}
\pd{where $R_{\bar\y}$ is such that $\|\bar \y^*\| \leq R_{\bar\y}$ is the radius of the solution.}
\pd{As it follows from \cite{lan2017communication},  $R_{\bar \y}$ can be taken as $R_{\bar \y}^2 = \nicefrac{\|\nabla F(\x^*)\|_2^2}{\lm^+_{\min}(W)}$.} 
Since the Lipschitz constant for the dual function $\psi$ is $L_\psi = \nicefrac{\lm_{\max}(W)}{\mu}$, we get the statement of the theorem.

\subsection{Proof of Theorem~\ref{Th:stoch_err}}
The proof includes several steps. We start with the proofs of the technical lemmas. For convenience we repeat statements of lemmas again.
\begin{lemma}\label{lem:alpha_estimate_appendix}
     For the sequence $\alpha_{k+1}$ defined in \eqref{eq:Alg_const} we have for all $k\ge 0$
     \begin{equation}\label{eq:alpha_estimate_appendix}
         \alpha_{k+1} \le \widetilde{\alpha}_{k+1} \eqdef \frac{k+2}{2\pd{L_\psi}}.
     \end{equation}
\end{lemma}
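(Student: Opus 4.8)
The plan is to work directly from the two identities packaged into \eqref{eq:Alg_const}, namely $A_{k+1}=A_k+\alpha_{k+1}$ and $A_{k+1}=2L_\psi\alpha_{k+1}^2$, together with the initialization $A_0=\alpha_0=0$. First I would dispose of the base case $k=0$: since $A_0=0$, the two relations give $\alpha_1 = A_1 = 2L_\psi\alpha_1^2$, hence $\alpha_1=\tfrac{1}{2L_\psi}$, which is at most $\tfrac{2}{2L_\psi}=\widetilde{\alpha}_1$. The bulk of the argument is then to control the increments $\alpha_{k+1}-\alpha_k$ for $k\ge1$ and telescope back to this base case.

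For $k\ge1$ both $A_{k+1}=2L_\psi\alpha_{k+1}^2$ and $A_k=2L_\psi\alpha_k^2$ hold, so subtracting them and using $A_{k+1}-A_k=\alpha_{k+1}$ yields the clean identity
\begin{equation*}
2L_\psi(\alpha_{k+1}-\alpha_k)(\alpha_{k+1}+\alpha_k)=2L_\psi(\alpha_{k+1}^2-\alpha_k^2)=\alpha_{k+1}.
\end{equation*}
Since $\alpha_1>0$ and the right-hand side is positive, this forces $\alpha_{k+1}>\alpha_k$, so the sequence is strictly increasing and in particular positive. Solving for the increment and using $\alpha_{k+1}+\alpha_k\ge\alpha_{k+1}>0$ gives
\begin{equation*}
\alpha_{k+1}-\alpha_k=\frac{\alpha_{k+1}}{2L_\psi(\alpha_{k+1}+\alpha_k)}\le\frac{1}{2L_\psi}.
\end{equation*}
Telescoping from the base case then produces $\alpha_{k+1}=\alpha_1+\sum_{j=1}^k(\alpha_{j+1}-\alpha_j)\le \tfrac{1}{2L_\psi}+\tfrac{k}{2L_\psi}=\tfrac{k+1}{2L_\psi}\le\widetilde{\alpha}_{k+1}$, which is the claimed bound (and in fact slightly sharper).

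The only delicate point — and the step I would be most careful about — is the direction in which $\alpha_{k+1}+\alpha_k$ is estimated. The very same identity, if one instead bounds $\alpha_{k+1}+\alpha_k\le 2\alpha_{k+1}$, produces the \emph{lower} bound $\alpha_{k+1}-\alpha_k\ge\tfrac{1}{4L_\psi}$, which is the useful estimate for lower-bounding $A_N$ (i.e.\ the acceleration rate) but is the wrong direction here. Thus the crux is to pair the factorization with $\alpha_k\ge0$ so that $\tfrac{\alpha_{k+1}}{\alpha_{k+1}+\alpha_k}\le1$, after which everything reduces to routine telescoping plus the separate treatment of $k=0$. An equivalent one-line route is induction: assuming $\alpha_k\le\tfrac{k+1}{2L_\psi}$, the increment bound immediately yields $\alpha_{k+1}\le\alpha_k+\tfrac{1}{2L_\psi}\le\tfrac{k+2}{2L_\psi}$.
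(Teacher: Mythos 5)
Your proof is correct, but it follows a different route from the paper's. The paper argues by induction: it writes $2L_\psi\alpha_{l+1}^2=A_{l+1}=\sum_{i=1}^{l+1}\alpha_i$, bounds the sum via the inductive hypothesis to obtain the quadratic inequality $2L_\psi\alpha_{l+1}^2\le\alpha_{l+1}+\frac{l(l+3)}{4L_\psi}$, and then extracts $\alpha_{l+1}\le\frac{l+2}{2L_\psi}$ from the larger root. You instead subtract the consecutive relations $A_{k+1}=2L_\psi\alpha_{k+1}^2$ and $A_k=2L_\psi\alpha_k^2$ to get the exact increment identity $2L_\psi(\alpha_{k+1}^2-\alpha_k^2)=\alpha_{k+1}$, bound each increment by $\frac{1}{2L_\psi}$, and telescope; this is cleaner (no quadratic formula, no sum over the inductive hypothesis) and yields the marginally sharper bound $\alpha_{k+1}\le\frac{k+1}{2L_\psi}$, though the improvement by one unit is immaterial downstream. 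Both arguments share the same implicit convention that $\alpha_{k+1}$ is the positive root of \eqref{eq:Alg_const}; your phrase ``the right-hand side is positive'' presupposes $\alpha_{k+1}>0$ rather than deriving it, so strictly you should say that positivity of all $\alpha_k$ follows from the choice of the positive root (or by induction from $A_k>0$), after which monotonicity and the bound $\alpha_{k+1}+\alpha_k\ge\alpha_{k+1}$ are immediate. Your side remark that estimating $\alpha_{k+1}+\alpha_k\le 2\alpha_{k+1}$ in the same identity gives $\alpha_{k+1}-\alpha_k\ge\frac{1}{4L_\psi}$ is apt: that is precisely the companion estimate underlying the growth $A_N=\Omega(N^2/L_\psi)$ invoked later in the paper.
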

\begin{proof}
    We prove \eqref{eq:alpha_estimate_appendix} by induction. For $k=0$ equation \eqref{eq:Alg_const} gives us $\alpha_1 = 2L_\psi\alpha_1^2 \Longleftrightarrow \alpha_1 = \frac{1}{2L_\psi}$. Next we assume that \eqref{eq:alpha_estimate_appendix} holds for all $k\ge l-1$ and prove it for $k=l$:
    \begin{eqnarray*}
         2L_\psi\alpha_{l+1}^2 &\overset{\eqref{eq:Alg_const}}{=}& \sum\limits_{i=1}^{l+1}\alpha_i \overset{\eqref{eq:alpha_estimate}}{\le} \alpha_{l+1} + \frac{1}{2L_\psi}\sum\limits_{i=1}^{l}(i+1) = \alpha_{l+1} + \frac{l(l+3)}{4L_\psi}.
    \end{eqnarray*}
    This quadratic inequality implies that $\alpha_{k+1} \le \frac{1+\sqrt{4k^2 + 12k + 1}}{4L_\psi} \le \frac{1+\sqrt{(2k+3)^2}}{4L_\psi} \le \frac{2k+4}{4L_\psi} = \frac{k+2}{2L_\psi}$.
\end{proof}

\begin{lemma}\label{lem:new_recurrence_lemma_appendix}
     Let $A, B,$ and \dd{$\{r_i\}_{i=0}^N$} be non-negative numbers such that \dd{for all $l=1,\ldots,N$ }
    \begin{equation}\label{eq:new_bound_for_r_l_appendix}
         \frac{1}{2}r_l^2 \le Ar_0^2 + B\frac{r_0}{N}\sqrt{\sum\limits_{k=0}^{l-1}(k+2)r_k^2}.
     \end{equation}
     Then
     \begin{equation}\label{eq:new_recurrence_lemma_appendix}
         r_l \le Cr_0,
     \end{equation}
     where $C$ is such positive number that $C^2 \ge \max\{1, 2A+2BC\}$, i.e.\ one can choose $C = \max\{1, B + \sqrt{B^2 + 2A}\}$.
\end{lemma}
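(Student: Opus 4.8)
The plan is to establish $r_l \le Cr_0$ for all $l = 0,1,\ldots,N$ by strong induction on $l$, exactly as the one-line remark ("followed from induction") suggests. First I would dispose of the base case $l=0$: since $C$ is required to satisfy $C^2 \ge 1$ together with $C>0$, we have $C \ge 1$, and because $r_0 \ge 0$ this gives $r_0 \le Cr_0$ immediately. It is worth noting that the hypothesis \eqref{eq:new_bound_for_r_l_appendix} is only assumed for $l \ge 1$, so the base case must indeed be treated separately from the recurrence rather than extracted from it.

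For the inductive step I would assume $r_k \le Cr_0$ for every $k=0,1,\ldots,l-1$ and deduce the same bound at index $l$. Substituting $r_k^2 \le C^2 r_0^2$ into the right-hand side of \eqref{eq:new_bound_for_r_l_appendix} yields
$$\frac{1}{2}r_l^2 \le Ar_0^2 + B\frac{r_0}{N}\sqrt{C^2 r_0^2\sum_{k=0}^{l-1}(k+2)} = Ar_0^2 + \frac{BCr_0^2}{N}\sqrt{\sum_{k=0}^{l-1}(k+2)}.$$
The next step is to evaluate the arithmetic sum $\sum_{k=0}^{l-1}(k+2) = \frac{l(l+3)}{2}$ and bound it by $N^2$. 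Since $l \le N$, the worst case is $l=N$, and $\frac{N(N+3)}{2} \le N^2$ holds precisely when $N \ge 3$, which is consistent with the running assumptions on $N$ in Theorem~\ref{Th:stoch_err}. With $\sqrt{\sum_{k=0}^{l-1}(k+2)} \le N$ the display collapses to $\frac{1}{2}r_l^2 \le (A+BC)r_0^2$, whence $r_l^2 \le (2A+2BC)r_0^2 \le C^2 r_0^2$ by the defining inequality $C^2 \ge 2A+2BC$; taking square roots closes the induction.

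Finally I would confirm that the explicit choice $C = \max\{1, B+\sqrt{B^2+2A}\}$ meets the requirement $C^2 \ge \max\{1, 2A+2BC\}$. If the maximum is attained at $1$, then by definition $2A+2BC \le 1$ and $C^2 = 1$ suffices; otherwise $C = B+\sqrt{B^2+2A}$ gives $(C-B)^2 = B^2+2A$, that is $C^2 = 2A+2BC$ with equality, so the needed inequality holds. The only mildly delicate point, and the main (though minor) obstacle, is the sum estimate $\frac{l(l+3)}{2} \le N^2$, which is where the implicit hypothesis $N \ge 3$ enters; everything else is a routine substitution of the induction hypothesis followed by the algebraic identity defining $C$.
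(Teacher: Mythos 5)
Your proof is correct and follows essentially the same induction argument as the paper's: base case from $C\ge 1$, substitution of the inductive hypothesis into the right-hand side, evaluation of the arithmetic sum, the bound $\sqrt{\sum_k (k+2)}\le N$, and the closing inequality $2A+2BC\le C^2$. The only difference is cosmetic and in your favor: you compute the sum correctly as $l(l+3)/2$ and flag the resulting need for $N\ge 3$, whereas the paper writes it as $(l+1)(l+2)/2$ (a minor arithmetic slip) and therefore bounds it by $N(N+1)/2\le N^2$ without that caveat.
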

\begin{proof}
     We prove \eqref{eq:new_recurrence_lemma_appendix} by induction. For $l=0$ the inequality $r_l \le Cr_0$ trivially follows since $C \ge 1$. Next we assume that \eqref{eq:new_recurrence_lemma_appendix} holds for some $l < N$ and prove it for $l+1$:
     \begin{eqnarray*}
        r_{l+1} &\overset{\eqref{eq:new_bound_for_r_l_appendix}}{\le}& \sqrt{2}\sqrt{Ar_0^2 + B \frac{r_0}{N}\sqrt{\sum\limits_{k=0}^{l}(k+2)r_k^2}}
        \overset{\eqref{eq:new_recurrence_lemma_appendix}}{\le} r_0\sqrt{2}\sqrt{A + \frac{BC}{N}\sqrt{\sum\limits_{k=0}^l(k+2)}}\\
        &=& r_0\sqrt{2}\sqrt{A + \frac{BC}{N}\sqrt{\frac{(l+1)(l+2)}{2}}} \le r_0\sqrt{2}\sqrt{A + \frac{BC}{N}\sqrt{\frac{N(N+1)}{2}}}\\
        &\le& r_0\underbrace{\sqrt{2A + 2BC}}_{\le C} \le Cr_0.
     \end{eqnarray*}
 \end{proof}

\begin{lemma}\label{lem:tails_estimate_appendix}
    Let the sequences of non-negative numbers $\{\alpha_k\}_{k\ge 0}$, random non-negative variables $\{R_k\}_{k\ge 0}$ and random vectors $\{\eta^k\}_{k\ge 0}$ and $\{a^k\}_{k\ge 0}$  for all $l = 1,\ldots,N$ satisfy
    \begin{eqnarray}
        \frac{1}{2}R_l^2 \le \eg{A} + \eg{u}\sum\limits_{k=0}^{l-1}\alpha_{k+1}\la\eta^{k+1}, a^k\ra+ \eg{c}\sum\limits_{k=0}^{l-1}\alpha_{k+1}^2\|\eta^{k+1}\|_2^2\label{eq:radius_recurrence_appendix}
    \end{eqnarray}
    where \eg{$A$ is deterministic non-negative number}, $\|a^k\|_2 \le \eg{d}\widetilde{R}_k$, \eg{$d\ge 1$ is some positive deterministic constant} and $\widetilde{R}_k = \max\{\widetilde{R}_{k-1}, R_k\}$ for all $k\ge 1$, $\widetilde{R}_0 = R_0$, $\widetilde{R}_k$ depends only on $\eta_0,\ldots,\eta^k$. Moreover, assume, vector $a^k$ is a function of $\eta^0,\ldots,\eta^{k-1}$  $\forall k\ge 1$, $a^0$ is a deterministic vector, and  $\forall  k \ge 0$, 
    \begin{eqnarray}
\EE\left[\eta^k\mid \{\eta^j\}_{j=0}^{k-1}\right] = 0,\quad \EE\left[\exp\left({\|\eta^k\|_2^2}{\sigma_k^{-2}}\right)\mid\{\eta^j\}_{j=0}^{k-1}\right] \le \exp(1), \label{eq:eta_k_properties_appendix}
    \end{eqnarray}
    \eg{$\alpha_{k+1} \le \widetilde{\alpha}_{k+1} = D(k+2)$, $\sigma_k^2 \le \frac{C\varepsilon}{\widetilde{\alpha}_{k+1}\ln(\nicefrac{N}{\delta})}$ for some $D, C>0$, $\varepsilon > 0$}. \eg{If additionally $\varepsilon \le \nicefrac{HR_0^2}{N^2}$,} then with probability at least $1-2\delta$ the inequalities
  \begin{eqnarray}
        &\widetilde{R}_l \le JR_0 \quad \text{ and }\label{eq:tails_estimate_radius_appendix}\\
        &\eg{u}\sum_{k=0}^{l-1}\alpha_{k+1}\la\eta^{k+1}, a^k\ra + \eg{c}\sum_{k=0}^{l-1}\alpha_{k+1}^2\|\eta^{k+1}\|_2^2 \le \left(\eg{24cCDH + udC_1\sqrt{CDHJg(N)}}\right)R_0^2\label{eq:tails_estimate_stoch_part_appendix}
  \end{eqnarray}
    hold $\forall l=1,\ldots,N$ simultaneously. Here \eg{$C_1$ is some positive constant, $g(N) = \frac{\ln\left(\nicefrac{N}{\delta}\right) + \ln\ln\left(\nicefrac{B}{b}\right)}{\ln\left(\nicefrac{N}{\delta}\right)}$,
    \begin{eqnarray*}
    B = 2d^2CDHR_0^2\Big(2A + ud\widetilde{R}_0^2+12CD\varepsilon\left(2c+ud\right)N(N+3)\Big)(2ud)^N,
    \end{eqnarray*}
    $b = \sigma_0^2\widetilde{\alpha}_{1}^2d^2\widetilde{R}_0^2$ and
    \begin{eqnarray*}
    J = \max\left\{1, udC_1\sqrt{CDH g(N)} + \sqrt{u^2d^2C_1^2CDH g(N) + \frac{2A}{R_0^2} + 48cCDH}\right\}.
    \end{eqnarray*}}
\end{lemma}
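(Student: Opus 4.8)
The plan is to read \eqref{eq:radius_recurrence_appendix} as a \emph{self-referential} quadratic inequality for the running maximum $\widetilde R_l$ and to reduce it, with high probability and uniformly in $l$, to a deterministic recurrence of exactly the form treated in Lemma~\ref{lem:new_recurrence_lemma_appendix}. Concretely, I would bound the two stochastic contributions on the right-hand side of \eqref{eq:radius_recurrence_appendix}, namely the martingale-type term $u\sum_{k}\alpha_{k+1}\la\eta^{k+1},a^k\ra$ and the quadratic term $c\sum_{k}\alpha_{k+1}^2\|\eta^{k+1}\|_2^2$, by multiples of $R_0^2$ and of $R_0\widetilde R_{l-1}$; the inequality then becomes $\tfrac12 R_l^2\le A+(\text{const})\,R_0\widetilde R_{l-1}+(\text{const})\,R_0^2$, from which a short induction gives \eqref{eq:tails_estimate_radius_appendix}, and a final re-substitution gives \eqref{eq:tails_estimate_stoch_part_appendix}.

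First I would dispose of the quadratic term. By the light-tail hypothesis \eqref{eq:eta_k_properties_appendix} and the tail/super-exponential-moment equivalence of Lemma~\ref{lem:jin_lemma_2}, each $\|\eta^{k+1}\|_2^2$ is $O(\sigma_{k+1}^2\ln(N/\delta))$ with probability at least $1-\delta/N$; a union bound over $k=0,\dots,N-1$ yields an event $E_1$ of probability at least $1-\delta$ on which $c\sum_{k=0}^{l-1}\alpha_{k+1}^2\|\eta^{k+1}\|_2^2$ is controlled for every $l\le N$ simultaneously (the sum is nondecreasing in $l$). Inserting $\alpha_{k+1}\le\widetilde\alpha_{k+1}=D(k+2)$ and $\sigma_k^2\le C\varepsilon/(\widetilde\alpha_{k+1}\ln(N/\delta))$ collapses the bound to a multiple of $\varepsilon\sum_k(k+2)=O(\varepsilon N^2)$, and the assumption $\varepsilon\le HR_0^2/N^2$ turns this into the term $24cCDH\,R_0^2$ (the precise constant being a matter of tracking the tail factor).

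The martingale term is the crux. Since $a^k$ is a function of $\eta^0,\dots,\eta^{k-1}$ and $\EE[\eta^{k+1}\mid\{\eta^j\}_{j\le k}]=0$, the scalars $\alpha_{k+1}\la\eta^{k+1},a^k\ra$ form a martingale-difference sequence for the filtration generated by $\{\eta^j\}$, and the conditional sub-Gaussian proxy of the $k$-th term, $\widehat\sigma_k=\alpha_{k+1}\|a^k\|_2\sigma_{k+1}\le d\,\widetilde\alpha_{k+1}\widetilde R_k\sigma_{k+1}$, is predictable. I would apply Lemma~\ref{lem:jin_corollary} with $n=1$ (and a union bound over $l=1,\dots,N$, which is what produces the $\ln(N/\delta)$), obtaining on an event $E_2$ of probability at least $1-\delta$ the dichotomy ``$\sum_k\widehat\sigma_k^2\ge B$ or the concentration bound holds.'' The \textbf{main obstacle} lives here: the proxy $\widehat\sigma_k^2\propto\widetilde R_k^2$ is exactly the random quantity being controlled, so to rule out the first branch I must supply a \emph{deterministic} threshold $B$ with $\sum_k\widehat\sigma_k^2<B$. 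I would obtain it by deriving on $E_1$ a crude deterministic upper bound for $\widetilde R_k$, iterating \eqref{eq:radius_recurrence_appendix} with the light-tail bounds and Young's inequality; this recursion is where the harmless geometric factor $(2ud)^N$ in the stated $B$ originates, while $b=\sigma_0^2\widetilde\alpha_1^2 d^2\widetilde R_0^2$ is merely the positive floor bounding the first variance term. Since $B$ enters only through $\ln\ln(B/b)$ inside $g(N)$, its crudeness is immaterial.

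On $E_1\cap E_2$ (probability at least $1-2\delta$) the two estimates reduce \eqref{eq:radius_recurrence_appendix} to $\tfrac12 R_l^2\le A+udC_1\sqrt{CDH\,g(N)}\,R_0\widetilde R_{l-1}+24cCDH\,R_0^2$, after bounding $\sum_{k=0}^{l-1}\widehat\sigma_k^2\le d^2\widetilde R_{l-1}^2\sum_k\widetilde\alpha_{k+1}^2\sigma_{k+1}^2=O\!\left(d^2CDH\,R_0^2\,\widetilde R_{l-1}^2/\ln(N/\delta)\right)$. I then prove $\widetilde R_l\le JR_0$ by induction on $l$, exactly as in Lemma~\ref{lem:new_recurrence_lemma_appendix}: the base case uses $J\ge1$ and $\widetilde R_0=R_0$, while in the step I substitute $\widetilde R_{l-1}\le JR_0$ and use that $J$ is chosen as the positive root of $\tfrac12 J^2=udC_1\sqrt{CDH\,g(N)}\,J+A/R_0^2+24cCDH$, so the right-hand side is at most $\tfrac12 J^2R_0^2$ and hence $R_l\le JR_0$; since $\widetilde R_l=\max\{\widetilde R_{l-1},R_l\}$ this yields \eqref{eq:tails_estimate_radius_appendix}. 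Re-inserting the now-established radius bound into the martingale and variance estimates gives \eqref{eq:tails_estimate_stoch_part_appendix}. The only genuine care left is bookkeeping of absolute constants (and the use of $\ln(N/\delta)$ being bounded below) so that the numerical factors and the stated forms of $J$ and $g(N)$ emerge.
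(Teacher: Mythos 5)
Your proposal is correct and follows essentially the same route as the paper's proof: high-probability pointwise bounds on $\|\eta^k\|_2$ for the quadratic term, a crude unrolled recurrence (producing the $(2ud)^N$ factor) to supply the deterministic threshold $B$ needed to discharge the first branch of Lemma~\ref{lem:jin_corollary}, and then the refined recurrence closed by induction to get $\widetilde{R}_l\le JR_0$ before re-substituting. The only cosmetic difference is that you pull out $\widetilde{R}_{l-1}$ from the variance sum and run the induction directly, whereas the paper keeps the weighted sum $\sqrt{\sum_{k}(k+2)\widetilde{R}_k^2}$ and invokes the separately stated Lemma~\ref{lem:new_recurrence_lemma_appendix}; these are equivalent.
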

\begin{proof}
     We start with applying Cauchy-Schwartz inequality to the second term in the right-hand side of \eqref{eq:radius_recurrence}:
    \begin{eqnarray}\label{eq:radius_recurrence2}
        \frac{1}{2}R_l^2 &\le& A + ud\sum\limits_{k=0}^{l-1}\alpha_{k+1}\|\eta^k\|_2\widetilde{R}_k + c\sum\limits_{k=0}^{l-1}\alpha_{k+1}^2\|\eta^k\|_2^2,\notag\\
        &\le& A + \frac{ud}{2}\sum\limits_{k=0}^{l-1}\widetilde{R}_k^2 + \left(c + \frac{ud}{2}\right)\sum\limits_{k=0}^{l-1}\widetilde{\alpha}_{k+1}^2\|\eta^k\|_2^2.
    \end{eqnarray}

    The idea of the proof is as following: estimate $R_N^2$ roughly, then apply Lemma~\ref{lem:jin_corollary} in order to estimate second term in the last row of \eqref{eq:radius_recurrence_appendix} and after that use the obtained recurrence to estimate right-hand side of \eqref{eq:radius_recurrence_appendix}.
    
    Using Lemma~\ref{lem:jud_nem_large_dev} we get that with probability at least $1 - \frac{\delta}{N}$
    \begin{eqnarray}
        \|\eta^k\|_2 &\le& \sqrt{2}\left(1 + \sqrt{3\ln\frac{N}{\delta}}\right)\sigma_k \le \sqrt{2}\left(1 + \sqrt{3\ln\frac{N}{\delta}}\right)\frac{\sqrt{C\varepsilon}}{\sqrt{\widetilde{\alpha}_{k+1}\ln\left(\frac{N}{\delta}\right)}}\notag\\
        &=& \left(\frac{1}{\sqrt{\widetilde{\alpha}_{k+1}\ln\left(\frac{N}{\delta}\right)}} + \sqrt{\frac{3}{\widetilde{\alpha}_{k+1}}}\right)\sqrt{2C\varepsilon} \le 2\sqrt{\frac{3}{\widetilde{\alpha}_{k+1}}}\sqrt{2C\varepsilon},\label{eq:eta_k_norm_bound}
    \end{eqnarray}
    where in the last inequality we use $\ln\frac{N}{\delta} \ge 3$.
    Using union bound we get that with probability $\ge 1 - \delta$ the inequality
    \begin{eqnarray*}
        \frac{1}{2}R_l^2 &\le& A + \frac{ud}{2}\sum\limits_{k=0}^{l-1}\widetilde{R}_k^2 + 24C\varepsilon\left(c+\frac{ud}{2}\right)\sum\limits_{k=0}^{l-1}\widetilde{\alpha}_{k+1}\\
        &\le& A + \frac{ud}{2}\sum\limits_{k=0}^{l-1}\widetilde{R}_k^2 + 24CD\varepsilon\left(c+\frac{ud}{2}\right)\sum\limits_{k=0}^{l-1}(k+2)\\
        &\le& A + \frac{ud}{2}\sum\limits_{k=0}^{l-1}\widetilde{R}_k^2 + 12CD\varepsilon\left(c+\frac{ud}{2}\right)l(l+3)
    \end{eqnarray*}
    holds for all $l=1,\ldots,N$ simultaneously. Note that the last row in the previous inequality is non-decreasing function of $l$. If we define $\hat{l}$ as the largest integer such that $\hat{l}\le l$ and $\widetilde{R}_{\hat{l}} = R_{\hat{l}}$, we will get that $R_{\hat{l}} = \widetilde{R}_{\hat{l}} = \widetilde{R}_{\hat{l}+1} = \ldots = \widetilde{R}_{l}$ and, as a consequence, with probability $\ge 1 - \delta$
    \begin{eqnarray*}
        \frac{1}{2}\widetilde{R}_l^2 &\le& A + \frac{ud}{2}\sum\limits_{k=0}^{\hat{l}-1}\widetilde{R}_k^2 + 12CD\varepsilon\left(c+\frac{ud}{2}\right)\hat{l}(\hat{l}+3)\\
        &\le& A + \frac{ud}{2}\sum\limits_{k=0}^{l-1}\widetilde{R}_k^2 + 12CD\varepsilon\left(c+\frac{ud}{2}\right)l(l+3),\quad \forall l=1,\ldots,N.
    \end{eqnarray*}
    Therefore, we have that with probability $\ge 1 - \delta$
    \begin{eqnarray}
        \widetilde{R}_l^2 &\le& 2A + ud\sum\limits_{k=0}^{l-1}\widetilde{R}_k^2 + 12CD\varepsilon\left(2c+ud\right)l(l+3)\notag\\
        &\le& 2A\underbrace{(1+ud)}_{\le 2ud} + \underbrace{(ud + u^2d^2)}_{\le 2u^2d^2}\sum\limits_{k=0}^{l-2}\widetilde{R}_k^2 + 12CD\varepsilon(2c+ud)\underbrace{(l(l+3) + ud(l-1)(l+2))}_{\le 2udl(l+3)}\notag\\
        &\le& 2ud\left(2A + ud\sum\limits_{k=0}^{l-2}\widetilde{R}_k^2 + 12CD\varepsilon\left(2c+ud\right)l(l+3)\right),\quad \forall l = 1,\ldots, N.\notag
    \end{eqnarray}
    Unrolling the recurrence we get that with probability $\ge 1 - \delta$
    \begin{eqnarray*}
      \widetilde{R}_l^2 &\le& \left(2A + ud\widetilde{R}_0^2 + 12CD\varepsilon\left(2c+ud\right)l(l+3)\right)(2ud)^l,\quad \forall l = 1,\ldots, N.
    \end{eqnarray*}
    We emphasize that it is very rough estimate, but we show next that such a bound does not spoil the final result too much.
    It implies that with probability $\ge 1-\delta$
    \begin{equation}\label{eq:bound_sum_squared_radius}
        \sum\limits_{k=0}^{l-1}\widetilde{R}_k^2 \le  l\left(2A + ud\widetilde{R}_0^2 + 12CD\varepsilon\left(2c+ud\right)l(l+3)\right)(2ud)^l,\quad \forall l=1,\ldots,N.
    \end{equation}
    
    Next we apply delicate result from \cite{jin2019short} which is presented in Section~\ref{sec:aux_results} as Lemma~\ref{lem:jin_corollary}. We consider random variables $\xi^k = \widetilde{\alpha}_{k+1}\la\eta^k, a^k \ra$. Note that $\EE\left[\xi^k\mid \xi^0,\ldots,\xi^{k-1}\right] = \widetilde{\alpha}_{k+1}\left\la\EE\left[\eta^k\mid \eta^0,\ldots,\eta^{k-1}\right], a^k \right\ra = 0$ and
    \begin{eqnarray*}
        \EE\left[\exp\left(\frac{(\xi^k)^2}{\sigma_k^2\widetilde{\alpha}_{k+1}^2d^2\widetilde{R}_k^2}\right)\mid \xi^0,\ldots,\xi^{k-1}\right] &\le& \EE\left[\exp\left(\frac{\widetilde{\alpha}_{k+1}^2\|\eta^k\|_2^2 d^2\widetilde{R}_k^2}{\sigma_k^2\widetilde{\alpha}_{k+1}^2d^2\widetilde{R}_k^2}\right)\mid \eta^0,\ldots,\eta^{k-1}\right]\\
        &=& \EE\left[\exp\left(\frac{\|\eta^k\|_2^2}{\sigma_k^2}\right)\mid \eta^0,\ldots,\eta^{k-1}\right] \le \exp(1)
    \end{eqnarray*}
    due to Cauchy-Schwartz inequality and assumptions of the lemma. If we denote $\hat\sigma_k^2 = \sigma_k^2\widetilde{\alpha}_{k+1}^2d^2\widetilde{R}_k^2$ and apply Lemma~\ref{lem:jin_corollary} with $B = 2d^2CDHR_0^2\left(2A + ud\widetilde{R}_0^2 + 12CD\varepsilon\left(2c+ud\right)N(N+3)\right)(2ud)^N$ and $b=\hat{\sigma}_0^2$, we get that for all $l=1,\ldots,N$ with probability $\ge 1-\frac{\delta}{N}$
    \begin{equation*}
        \text{either} \sum\limits_{k=0}^{l-1}\hat\sigma_k^2 \ge B \text{ or } \left|\sum\limits_{k=0}^{l-1}\xi^k\right| \le C_1\sqrt{\sum\limits_{k=0}^{l-1}\hat\sigma_k^2\left(\ln\left(\frac{N}{\delta}\right) + \ln\ln\left(\frac{B}{b}\right)\right)}
    \end{equation*}
    with some constant $C_1 > 0$ which does not depend on $B$ or $b$. Using union bound we obtain that with probability $\ge 1 - \delta$
    \begin{equation*}
        \text{either} \sum\limits_{k=0}^{l-1}\hat\sigma_k^2 \ge B \text{ or } \left|\sum\limits_{k=0}^{l-1}\xi^k\right| \le C_1\sqrt{\sum\limits_{k=0}^{l-1}\hat\sigma_k^2\left(\ln\left(\frac{N}{\delta}\right) + \ln\ln\left(\frac{B}{b}\right)\right)}
    \end{equation*}
    and it holds for all $l=1,\ldots, N$ simultaneously. Note that with probability at least $1-\delta$
    \begin{eqnarray*}
        \sum\limits_{k=0}^{l-1}\hat\sigma_k^2 &=& d^2\sum\limits_{k=0}^{l-1}\sigma_k^2\widetilde{\alpha}_{k+1}^2\widetilde{R}_k^2 \le d^2\sum\limits_{k=0}^{l-1}\frac{C\varepsilon}{\ln\frac{N}{\delta}}\widetilde{\alpha}_{k+1}\widetilde{R}_k^2\\
        &\le& \frac{d^2CDHR_0^2}{N^2\ln\frac{N}{\delta}}\sum\limits_{k=0}^{l-1}(k+2)\widetilde{R}_k^2 \le \frac{d^2CDHR_0^2}{3N}\cdot\frac{N+1}{N}\sum\limits_{k=0}^{l-1}\widetilde{R}_k^2\\
        &\overset{\eqref{eq:bound_sum_squared_radius}}{\le}& \frac{d^2CDHR_0^2}{N}l\left(2A + ud\widetilde{R}_0^2 + 12CD\varepsilon\left(2c+ud\right)l(l+3)\right)(2ud)^l\\
        &\le& \frac{B}{2}
    \end{eqnarray*}
    for all $l=1,\ldots, N$ simultaneously. Using union bound again we get that with probability $\ge 1 - 2\delta$ the inequality
    \begin{equation}\label{eq:bound_inner_product}
        \left|\sum\limits_{k=0}^{l-1}\xi^k\right| \le C_1\sqrt{\sum\limits_{k=0}^{l-1}\hat\sigma_k^2\left(\ln\left(\frac{N}{\delta}\right) + \ln\ln\left(\frac{B}{b}\right)\right)}
    \end{equation}
    holds for all $l=1,\ldots,N$ simultaneously.
    
    Note that we also proved that \eqref{eq:eta_k_norm_bound} is in the same event together with \eqref{eq:bound_inner_product} and holds with probability $\ge 1 - 2\delta$. Putting all together in \eqref{eq:radius_recurrence_appendix}, we get that with probability at least $1-2\delta$ the inequality
    \begin{eqnarray*}
        \frac{1}{2}\widetilde{R}_l^2 &\overset{\eqref{eq:radius_recurrence}}{\le}&  A + u\sum\limits_{k=0}^{l-1}\alpha_{k+1}\la\eta^k, a^k\ra + c\sum\limits_{k=0}^{l-1}\alpha_{k+1}^2\|\eta^k\|_2^2\\
        &\overset{\eqref{eq:bound_inner_product}}{\le}& A + uC_1\sqrt{\sum\limits_{k=0}^{l-1}\hat\sigma_k^2\left(\ln\left(\frac{N}{\delta}\right) + \ln\ln\left(\frac{B}{b}\right)\right)} + 24cC\varepsilon\sum\limits_{k=0}^{l-1}\widetilde{\alpha}_{k+1}
    \end{eqnarray*}
    holds for all $l=1,\ldots,N$ simultaneously. For brevity, we introduce new notation: $g(N) = \frac{\ln\left(\frac{N}{\delta}\right) + \ln\ln\left(\frac{B}{b}\right)}{\ln\left(\frac{N}{\delta}\right)} \approx 1$ (neglecting constant factor). Using our assumption $\sigma_k^2 \le \frac{C\varepsilon}{\widetilde{\alpha}_{k+1}\ln\left(\frac{N}{\delta}\right)}$ and definition $\hat\sigma_k^2 = \sigma_k^2\widetilde{\alpha}_{k+1}^2d^2\widetilde{R}_k^2$ we obtain that with probability at least $1-2\delta$ the inequality
    \begin{eqnarray}\label{eq:radius_recurrence_large_prob}
        \frac{1}{2}\widetilde{R}_l^2 &\le& A + u\sum\limits_{k=0}^{l-1}\alpha_{k+1}\la\eta^k, a^k\ra + c\sum\limits_{k=0}^{l-1}\alpha_{k+1}^2\|\eta^k\|_2^2 \notag\\
        &\le& A + 24cC\varepsilon\sum\limits_{k=0}^{l-1}\widetilde{\alpha}_{k+1} + udC_1\sqrt{C\varepsilon g(N)}\sqrt{\sum\limits_{k=0}^{l-1}\widetilde{\alpha}_{k+1}\widetilde{R}_k^2}\notag\\
        &\le& A + 24cCD\varepsilon\sum\limits_{k=0}^{l-1}(k+2) + udC_1\sqrt{CD\varepsilon g(N)}\sqrt{\sum\limits_{k=0}^{l-1}(k+2)\widetilde{R}_k^2}\notag\\
        &\le& A + 24cCD\frac{HR_0^2}{N^2}\frac{l(l+1)}{2} + udC_1\sqrt{CD\frac{HR_0^2}{N^2} g(N)}\sqrt{\sum\limits_{k=0}^{l-1}(k+2)\widetilde{R}_k^2}\notag\\
        &\le& \left(\frac{A}{R_0^2} + 24cCDH\right)R_0^2 + \frac{udC_1R_0}{N}\sqrt{CDH g(N)}\sqrt{\sum\limits_{k=0}^{l-1}(k+2)\widetilde{R}_k^2}
    \end{eqnarray}
    holds for all $l=1,\ldots,N$ simultaneously. Next we apply Lemma~\ref{lem:new_recurrence_lemma} with $A = \frac{A}{R_0^2} + 24cCDH$, $B = udC_1\sqrt{CDH g(N)}$, $r_k = \widetilde{R}_k$ and get that with probability at least $1-2\delta$ inequality
    \begin{eqnarray*}
        \widetilde{R}_l \le JR_0
    \end{eqnarray*}
    holds for all $l=1,\ldots,N$ simultaneously with $$J = \max\left\{1, udC_1\sqrt{CDH g(N)} + \sqrt{u^2d^2C_1^2CDH g(N) + \frac{2A}{R_0^2} + 48cCDH}\right\}.$$ It implies that with probability at least $1-2\delta$ the inequality
    \begin{eqnarray*}
        A + u\sum\limits_{k=0}^{l-1}\alpha_{k+1}\la\eta^k, a^k\ra + c\sum\limits_{k=0}^{l-1}\alpha_{k+1}^2\|\eta^k\|_2^2 &\\
        &\hspace{-2cm}\le\left(\frac{A}{R_0^2} + 24cCDH\right)R_0^2 + \frac{udC_1R_0^2}{N}\sqrt{CDH g(N)}\sqrt{\sum\limits_{k=0}^{l-1}(k+2)J}\\
        &\hspace{-2.8cm}\le A + \left(24cCDH + udC_1\sqrt{CDHJg(N)}\frac{1}{N}\sqrt{\frac{l(l+1)}{2}}\right)R_0^2\\
        &\hspace{-4.5cm}\le A + \left(24cCDH + udC_1\sqrt{CDHJg(N)}\right)R_0^2
    \end{eqnarray*}
    holds for all $l=1,\ldots,N$ simultaneously.
\end{proof}

\begin{lemma}[see also Theorem~1 from \cite{dvurechenskii2018decentralize}]\label{lem:psi_y_N_bound}
    For each iteration of Algorithm~\ref{Alg:DualStochAlg} we have
    \begin{eqnarray}
       A_N\psi(\y^N)  &\leq& \frac{1}{2}\|\Blm - \Bzeta^0\|_2^2 - \frac{1}{2}\|\Blm - \Bzeta^N\|_2^2 + \sum_{k=0}^{N-1} \alpha_{k+1} \left( \psi(\Blm^{k+1}) + \la \nabla \Psi(\Blm^{k+1}, \Bxi^{k+1}), \Blm - \Blm^{k+1}\ra \right)  \notag \\ 
 &&\quad +  \sum_{k=0}^{N-1}A_{k}\la \nabla \Psi(\Blm^{k+1}, \Bxi^{k+1})  - \nabla \psi(\Blm^{k+1}) , \y^k- \Blm^{k+1} \ra \notag\\
 &&\quad +  \sum_{k=0}^{N-1}\frac{A_{k+1}}{2L_\psi}\|\nabla \psi(\Blm^{k+1}) -\nabla \Psi(\Blm^{k+1}, \Bxi^{k+1})\|_{2}^2,\label{eq:psi_y_N_bound}
 \end{eqnarray}
 where we use the following notation for the stochastic approximation of $\nabla \psi(\Blm)$ according to \eqref{eq:batched_estimates}  
\begin{align}
    \nabla \Psi(\Blm^k, \Bxi^k):=\nabla^{r_{k}} \psi(\Blm^{k}, \{\xi^{k}_i\}_{i=1}^{r_{k}}),
\end{align}
 where $\Bxi^k = (\xi^k_1,\dots, \xi^k_{r_k})$.
\end{lemma}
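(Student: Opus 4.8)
The plan is to derive \eqref{eq:psi_y_N_bound} as the telescoped sum of a single one-step inequality, valid for an arbitrary fixed comparison point $\Blm$. Concretely, I would prove for every $k$ the increment
\begin{align*}
A_{k+1}\psi(\y^{k+1}) - A_k\psi(\y^k) &\le \tfrac12\|\Blm - \Bzeta^k\|_2^2 - \tfrac12\|\Blm - \Bzeta^{k+1}\|_2^2 + \alpha_{k+1}\big(\psi(\Blm^{k+1}) + \la\nabla\Psi(\Blm^{k+1},\Bxi^{k+1}), \Blm - \Blm^{k+1}\ra\big)\\
&\quad + A_k\la\nabla\Psi(\Blm^{k+1},\Bxi^{k+1}) - \nabla\psi(\Blm^{k+1}), \y^k - \Blm^{k+1}\ra + \tfrac{A_{k+1}}{2L_\psi}\|\nabla\psi(\Blm^{k+1}) - \nabla\Psi(\Blm^{k+1},\Bxi^{k+1})\|_2^2,
\end{align*}
and then sum over $k=0,\dots,N-1$. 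Since $A_0=0$ the left side telescopes to $A_N\psi(\y^N)$, the first two terms telescope to $\tfrac12\|\Blm-\Bzeta^0\|_2^2 - \tfrac12\|\Blm-\Bzeta^N\|_2^2$, and the remaining three sums reproduce exactly the right-hand side of \eqref{eq:psi_y_N_bound}.

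To obtain the one-step bound I would combine three ingredients. (i) $L_\psi$-smoothness of $\psi$ at the pair $(\Blm^{k+1},\y^{k+1})$, which gives $A_{k+1}\psi(\y^{k+1}) \le A_{k+1}\psi(\Blm^{k+1}) + A_{k+1}\la\nabla\psi(\Blm^{k+1}), \y^{k+1}-\Blm^{k+1}\ra + \tfrac{A_{k+1}L_\psi}{2}\|\y^{k+1}-\Blm^{k+1}\|_2^2$. (ii) After splitting $A_{k+1}\psi(\Blm^{k+1}) = A_k\psi(\Blm^{k+1}) + \alpha_{k+1}\psi(\Blm^{k+1})$, I keep $\alpha_{k+1}\psi(\Blm^{k+1})$ for the $\Phi$-term and apply convexity of $\psi$ with comparison point $\y^k$, i.e.\ $A_k\psi(\Blm^{k+1}) \le A_k\psi(\y^k) - A_k\la\nabla\psi(\Blm^{k+1}), \y^k-\Blm^{k+1}\ra$; this produces the telescoping term $A_k\psi(\y^k)$, and writing $\nabla\psi(\Blm^{k+1}) = \nabla\Psi(\Blm^{k+1},\Bxi^{k+1}) - (\nabla\Psi-\nabla\psi)$ isolates the noise inner product $A_k\la\nabla\Psi-\nabla\psi, \y^k-\Blm^{k+1}\ra$ exactly as required. (iii) The Euclidean prox identity for the update \eqref{eq:Alg_zeta}, $\alpha_{k+1}\la\nabla\Psi, \Bzeta^k - \Blm\ra = \tfrac12\|\Bzeta^k-\Blm\|_2^2 - \tfrac12\|\Bzeta^{k+1}-\Blm\|_2^2 + \tfrac{\alpha_{k+1}^2}{2}\|\nabla\Psi\|_2^2$.

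The glue is the two convex-combination relations \eqref{eq:Alg_lambda} and \eqref{eq:Alg_y} defining the iterates. From them I get $\y^{k+1}-\Blm^{k+1} = \tfrac{\alpha_{k+1}}{A_{k+1}}(\Bzeta^{k+1}-\Bzeta^k) = -\tfrac{1}{2L_\psi}\nabla\Psi(\Blm^{k+1},\Bxi^{k+1})$ and $A_k(\y^k-\Blm^{k+1}) = \alpha_{k+1}(\Blm^{k+1}-\Bzeta^k)$, both using $A_{k+1}=2L_\psi\alpha_{k+1}^2$ from \eqref{eq:Alg_const}. Substituting these turns the smoothness inner product into $\alpha_{k+1}\la\nabla\psi(\Blm^{k+1}), \Bzeta^{k+1}-\Bzeta^k\ra$ and the convexity term $-A_k\la\nabla\Psi, \y^k-\Blm^{k+1}\ra$ into $\alpha_{k+1}\la\nabla\Psi, \Bzeta^k-\Blm^{k+1}\ra$; splitting the latter as $\alpha_{k+1}\la\nabla\Psi, \Blm-\Blm^{k+1}\ra + \alpha_{k+1}\la\nabla\Psi, \Bzeta^k-\Blm\ra$ yields the $\Phi$-term and a term handed to the prox identity (iii).

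The main obstacle, and the only genuinely delicate point, is the bookkeeping of the quadratic terms. Writing $\Delta = \nabla\psi(\Blm^{k+1}) - \nabla\Psi(\Blm^{k+1},\Bxi^{k+1})$ and using $\Bzeta^{k+1}-\Bzeta^k = -\alpha_{k+1}\nabla\Psi$, the leftover quadratic contributions are the smoothness remainder $\tfrac14\|\Bzeta^{k+1}-\Bzeta^k\|_2^2 = \tfrac{\alpha_{k+1}^2}{4}\|\nabla\Psi\|_2^2$, the term $-\alpha_{k+1}^2\|\nabla\Psi\|_2^2$ from $\alpha_{k+1}\la\nabla\Psi,\Bzeta^{k+1}-\Bzeta^k\ra$, the prox surplus $+\tfrac{\alpha_{k+1}^2}{2}\|\nabla\Psi\|_2^2$, and the cross term $-\alpha_{k+1}^2\la\Delta,\nabla\Psi\ra$ created when $\nabla\psi$ is replaced by $\nabla\Psi$ in the smoothness inner product. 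I must check their sum is dominated by the budget $\tfrac{A_{k+1}}{2L_\psi}\|\Delta\|_2^2 = \alpha_{k+1}^2\|\Delta\|_2^2$. Dividing by $\alpha_{k+1}^2$, this reduces to $-\tfrac14\|\nabla\Psi\|_2^2 - \la\Delta,\nabla\Psi\ra \le \|\Delta\|_2^2$, i.e.\ $0 \le \|\Delta + \tfrac12\nabla\Psi\|_2^2$, which holds trivially. This perfect-square cancellation, enabled precisely by the step rule $A_{k+1}=2L_\psi\alpha_{k+1}^2$, is what makes all the constants line up, and completes the induction after summation.
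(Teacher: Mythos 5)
Your proof is correct and follows essentially the same route as the paper's: the same one-step inequality assembled from $L_\psi$-smoothness at $(\Blm^{k+1},\y^{k+1})$, convexity of $\psi$ at $\y^k$ weighted by $A_k$, the update for $\Bzeta^{k+1}$, the coupling identities $A_k(\y^k-\Blm^{k+1})=\alpha_{k+1}(\Blm^{k+1}-\Bzeta^k)$ and $\y^{k+1}-\Blm^{k+1}=\frac{\alpha_{k+1}}{A_{k+1}}(\Bzeta^{k+1}-\Bzeta^k)$, and telescoping with $A_0=0$. The only cosmetic difference is that the paper extracts the $\frac{A_{k+1}}{2L_\psi}\|\nabla\psi-\nabla\Psi\|_2^2$ budget via a Fenchel--Young split of the noise inner product, whereas you collect all quadratic terms and complete a perfect square; both give the same constants.
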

\begin{proof}
     The proof of this lemma follows a similar way as in the proof of Theorem~1 from \cite{dvurechenskii2018decentralize}. We can rewrite the update rule for $\Bzeta^k$ in the equivalent way:
\begin{equation*}
    \Bzeta^k = \argmin\limits_{\Blm\in\R^n}\left\{\alpha_{k+1}\la\nabla\Psi(\Blm^{k+1},\Bxi^{k+1}), \Blm - \Blm^{k+1}\ra + \frac{1}{2}\|\Blm-\Bzeta^k\|_2^2\right\}.
\end{equation*}
From the optimality condition we have that for all $z\in\R^n$
\begin{equation}
    \la \Bzeta^{k+1} - \Bzeta^k + \alpha_{k+1}\nabla\Psi(\Blm^{k+1},\Bxi^{k+1}), \Blm - \Bzeta^{k+1} \ra \ge 0.\label{eq:z_k_optimality_cond}
\end{equation}
Using this we get
\begin{eqnarray*}
    \alpha_{k+1}\la\nabla\Psi(\Blm^{k+1},\Bxi^{k+1}), \Bzeta^k - \Blm \ra&\\
    &\hspace{-1cm}= \alpha_{k+1}\la\nabla\Psi(\Blm^{k+1},\Bxi^{k+1}), \Bzeta^k - \Bzeta^{k+1} \ra + \alpha_{k+1}\la\nabla\Psi(\Blm^{k+1},\Bxi^{k+1}), \Bzeta^{k+1} - \Blm \ra\\
    &\hspace{-2.5cm}\overset{\eqref{eq:z_k_optimality_cond}}{\le} \alpha_{k+1}\la\nabla\Psi(\Blm^{k+1},\Bxi^{k+1}), \Bzeta^k - \Bzeta^{k+1} \ra + \la \Bzeta^{k+1}-\Bzeta^k, \Blm - \Bzeta^{k+1} \ra.
\end{eqnarray*}
One can check via direct calculations that
\begin{equation*}
    \la a, b \ra \le \frac{1}{2}\|a+b\|_2^2 - \frac{1}{2}\|a\|_2^2 - \frac{1}{2}\|b\|_2^2, \quad \forall\; a,b\in\R^n.
\end{equation*}
Combining previous two inequalities we obtain
\begin{eqnarray*}
    \alpha_{k+1}\la\nabla\Psi(\Blm^{k+1},\Bxi^{k+1}), \Bzeta^k - \Blm \ra &\le& \alpha_{k+1}\la\nabla\Psi(\Blm^{k+1},\Bxi^{k+1}), \Bzeta^k - \Bzeta^{k+1} \ra - \frac{1}{2}\|\Bzeta^{k} - \Bzeta^{k+1}\|_2^2\\
    &&\quad+ \frac{1}{2}\|\Bzeta^k - \Blm\|_2^2 - \frac{1}{2}\|\Bzeta^{k+1} - \Blm\|_2^2.
\end{eqnarray*}
By definition of $\y^{k+1}$ and $\Blm^{k+1}$
\begin{equation*}
    \y^{k+1} = \frac{A_k \y^k + \alpha_{k+1}\Bzeta^{k+1}}{A_{k+1}} = \frac{A_k \y^k + \alpha_{k+1}\Bzeta^{k}}{A_{k+1}} + \frac{\alpha_{k+1}}{A_{k+1}}\left(\Bzeta^{k+1}-\Bzeta^k\right) = \Blm^{k+1} + \frac{\alpha_{k+1}}{A_{k+1}}\left(\Bzeta^{k+1}-\Bzeta^k\right).
\end{equation*}
Together with previous inequality, it implies
\begin{eqnarray*}
    \alpha_{k+1}\la\nabla\Psi(\Blm^{k+1},\Bxi^{k+1}), \Bzeta^k - \Blm \ra &\le& A_{k+1}\la\nabla\Psi(\Blm^{k+1},\Bxi^{k+1}), \Blm^{k+1} - \y^{k+1} \ra - \frac{A_{k+1}^2}{2\alpha_{k+1}^2}\|\Blm^{k+1} - \y^{k+1}\|_2^2\\
    &&\quad+ \frac{1}{2}\|\Bzeta^k - \Blm\|_2^2 - \frac{1}{2}\|\Bzeta^{k+1}-\Blm\|_2^2\\
    &\le& A_{k+1}\left(\la\nabla\Psi(\Blm^{k+1},\Bxi^{k+1}), \Blm^{k+1} - \y^{k+1} \ra - \frac{2L_\psi}{2}\|\Blm^{k+1} - \y^{k+1}\|_2^2\right)\\
    &&\quad+ \frac{1}{2}\|\Bzeta^k - \Blm\|_2^2 - \frac{1}{2}\|\Bzeta^{k+1}-\Blm\|_2^2\\
    &=& A_{k+1}\left(\la\nabla\psi(\Blm^{k+1}), \Blm^{k+1} - \y^{k+1} \ra - \frac{2L_\psi}{2}\|\Blm^{k+1} - \y^{k+1}\|_2^2\right)\\
    &&\quad + A_{k+1}\la\nabla\Psi(\Blm^{k+1},\Bxi^{k+1}) - \nabla\psi(\Blm^{k+1}), \Blm^{k+1} - \y^{k+1}\ra\\
    &&\quad+ \frac{1}{2}\|\Bzeta^k - \Blm\|_2^2 - \frac{1}{2}\|\Bzeta^{k+1}-\Blm\|_2^2.
\end{eqnarray*}
From Fenchel-Young inequality $\la a, b\ra \le \frac{1}{2\eta}\|a\|_2^2 + \frac{\eta}{2}\|b\|_2^2$, $a,b\in\R^n$, $\eta > 0$, we have
\begin{eqnarray*}
    \la\nabla\Psi(\Blm^{k+1},\Bxi^{k+1}) - \nabla\psi(\Blm^{k+1}), \Blm^{k+1} - \y^{k+1}\ra &\\
    &\hspace{-1.5cm}\le \frac{1}{2L_\psi}\left\|\nabla\Psi(\Blm^{k+1},\Bxi^{k+1}) - \nabla\psi(\Blm^{k+1})\right\|_2^2 + \frac{L_\psi}{2}\|\Blm^{k+1} - \y^{k+1}\|_2^2.
\end{eqnarray*}
Using this, we get
\begin{eqnarray}
    \alpha_{k+1}\la\nabla\Psi(\Blm^{k+1},\Bxi^{k+1}), \Bzeta^k - \Blm \ra &\le& A_{k+1}\left(\la\nabla\psi(\Blm^{k+1}), \Blm^{k+1} - \y^{k+1} \ra - \frac{L_\psi}{2}\|\Blm^{k+1} - \y^{k+1}\|_2^2\right)\notag\\
    &&\quad + \frac{A_{k+1}}{2L_\psi}\left\|\nabla\Psi(\Blm^{k+1},\Bxi^{k+1}) - \nabla\psi(\Blm^{k+1})\right\|_2^2\notag\\
    &&\quad+ \frac{1}{2}\|\Bzeta^k - \Blm\|_2^2 - \frac{1}{2}\|\Bzeta^{k+1}-\Blm\|_2^2\notag\\
    &\le& A_{k+1}\left(\psi(\Blm^{k+1}) - \psi(\y^{k+1})\right) + \frac{1}{2}\|\Bzeta^k - \Blm\|_2^2 - \frac{1}{2}\|\Bzeta^{k+1}-\Blm\|_2^2\notag\\
    &&\quad + \frac{A_{k+1}}{2L_\psi}\left\|\nabla\Psi(\Blm^{k+1},\Bxi^{k+1}) - \nabla\psi(\Blm^{k+1})\right\|_2^2,\label{eq:inner_prod_bound_1}
\end{eqnarray}
where the last inequality follows from the $L_\psi$-smoothness of $\psi(y)$. From the convexity of $\psi(y)$, we have
\begin{eqnarray}
    \la\nabla\Psi(\Blm^{k+1},\Bxi^{k+1}), \y^{k} - \Blm^{k+1} \ra &\notag\\
    &\hspace{-2cm} = \la\nabla\psi(\Blm^{k+1}), \y^{k} - \Blm^{k+1}\ra + \la\nabla\Psi(\Blm^{k+1},\Bxi^{k+1}) - \nabla\psi(\Blm^{k+1}), \y^{k} - \Blm^{k+1} \ra\notag\\
    &\hspace{-2.8cm}\le  \psi(\y^k) - \psi(\Blm^{k+1}) + \la\nabla\Psi(\Blm^{k+1},\Bxi^{k+1}) - \nabla\psi(\Blm^{k+1}),\y^{k} - \Blm^{k+1} \ra.\label{eq:inner_prod_bound_2}
\end{eqnarray}
By definition of $\Blm^{k+1}$ we have
\begin{equation}
    \alpha_{k+1}\left(\Blm^{k+1} - \Bzeta^k\right) = A_k\left(\y^k - \Blm^{k+1}\right)\label{eq:tilde_y^k+1-z^k_relation}.
\end{equation}
Putting all together, we get
\begin{eqnarray*}
    \alpha_{k+1}\la\nabla \Psi(\Blm^{k+1},\Bxi^{k+1}),\Blm^{k+1} - \Blm \ra &\\
    &\hspace{-3cm}= \alpha_{k+1}\la\nabla \Psi(\Blm^{k+1},\Bxi^{k+1}),\Blm^{k+1} - \Bzeta^k \ra + \alpha_{k+1}\la\nabla \Psi(\Blm^{k+1},\Bxi^{k+1}),\Bzeta^k - \Blm \ra\\
    &\hspace{-3.3cm}\overset{\eqref{eq:tilde_y^k+1-z^k_relation}}{=} A_k\la\nabla \Psi(\Blm^{k+1},\Bxi^{k+1}),\y^k - \Blm^{k+1}\ra + \alpha_{k+1}\la\nabla \Psi(\Blm^{k+1},\Bxi^{k+1}),\Bzeta^k - \Blm \ra\\ 
    &\hspace{-2.55cm}\overset{\eqref{eq:inner_prod_bound_1},\eqref{eq:inner_prod_bound_2}}{\le} A_k\left(\psi(\y^k) - \psi(\Blm^{k+1})\right) + A_k\la\nabla\Psi(\Blm^{k+1},\Bxi^{k+1}) - \nabla\psi(\Blm^{k+1}),\y^{k} - \Blm^{k+1} \ra\\
    &\hspace{-3cm}+A_{k+1}\left(\psi(\Blm^{k+1}) - \psi(\y^{k+1})\right) + \frac{1}{2}\|\Bzeta^k - \Blm\|_2^2 - \frac{1}{2}\|\Bzeta^{k+1}-\Blm\|_2^2\notag\\
    &\hspace{-5.9cm} + \frac{A_{k+1}}{2L_\psi}\left\|\nabla\Psi(\Blm^{k+1},\Bxi^{k+1}) - \nabla\psi(\Blm^{k+1})\right\|_2^2.
\end{eqnarray*}
Rearranging the terms and using $A_{k+1} = A_k + \alpha_{k+1}$, we obtain
\begin{eqnarray*}
    A_{k+1}\psi(\y^{k+1}) - A_k\psi(\y^{k}) &\le&  \alpha_{k+1}\left(\psi(\Blm^{k+1}) + \la\nabla \Psi(\Blm^{k+1},\Bxi^{k+1}), \Blm - \Blm^{k+1}\ra\right) + \frac{1}{2}\|\Bzeta^k - \Blm\|_2^2\\
    &&\quad - \frac{1}{2}\|\Bzeta^{k+1}-\Blm\|_2^2 + \frac{A_{k+1}}{2L_\psi}\left\|\nabla\Psi(\Blm^{k+1},\Bxi^{k+1}) - \nabla\psi(\Blm^{k+1})\right\|_2^2\\
    &&\quad + A_k\la\nabla\Psi(\Blm^{k+1},\Bxi^{k+1}) - \nabla\psi(\Blm^{k+1}),\y^{k} - \Blm^{k+1} \ra,
\end{eqnarray*}
and after summing these inequalities for $k=0,\ldots,N-1$ we get
\begin{eqnarray*}
       A_N\psi(\y^N)  &\leq& \frac{1}{2}\|\Blm - \Bzeta^0\|_2^2 - \frac{1}{2}\|\Blm - \Bzeta^N\|_2^2 + \sum_{k=0}^{N-1} \alpha_{k+1} \left( \psi(\Blm^{k+1}) + \la \nabla \Psi(\Blm^{k+1}, \Bxi^{k+1}), \Blm - \Blm^{k+1}\ra \right)  \notag \\ 
 &&\quad +  \sum_{k=0}^{N-1}A_{k}\la \nabla \Psi(\Blm^{k+1}, \Bxi^{k+1})  - \nabla \psi(\Blm^{k+1}) , \y^k- \Blm^{k+1} \ra \notag\\
 &&\quad +  \sum_{k=0}^{N-1}\frac{A_{k+1}}{2L_\psi}\|\nabla \psi(\Blm^{k+1}) -\nabla \Psi(\Blm^{k+1}, \Bxi^{k+1})\|_{2}^2,
 \end{eqnarray*}
 where we use that $A_0 = 0$.
\end{proof}

Now, we are ready to prove our main result in Theorem~\ref{Th:stoch_err} on the communication and oracle complexity of Algorithm~\ref{Alg:DualStochAlg}. For convenience we provide the statement of the theorem once again.

\begin{theorem}
\label{Th:stoch_err_appendix}
Assume that $F$ is $\mu$-strongly convex and  $\|\nabla F(\x^*)\|_2 = M_F$. Let
     $\varepsilon>0$ be a desired accuracy. Assume that at each iteration of Algorithm \ref{Alg:DualStochAlg} the approximation for $\nabla \psi(\y)$ is chosen according to \eqref{eq:batched_estimates} with batch size  $r_k = \pd{\Omega\big(\max \big\{ 1, \nicefrac{ \sigma^2_\psi {\alpha}_k \ln(\nicefrac{N}{\delta})}{\e}\big\}\big) }$. \ed{Assume additionally that $F$ is $L_F$-Lipschitz continuous on the set $B_{R_F}(0) = \{\x\in \R^{nm}\mid \|\x\|_2 \le R_F\}$ where $R_F = {\Omega}\left(\max\left\{\frac{R_\y}{A_N}\sqrt{\frac{6C_2H}{\lambda_{\max}(W)}}, \frac{\lambda_{\max}(\sqrt{W})JR_\y}{\mu}, R_\x\right\}\right)$, $R_\y$ is such that $\|\y^*\|_2 \leq R_\y$, $\y^*$ being an optimal solution of the dual problem and $R_\x = \|\x(\sqrt{W}\y^*)\|_2$.}
Then, after  $N = \widetilde{O}\left(\sqrt{(\nicefrac{M^2_{F}}{\mu\e})\chi(W)} \right)$ iterations, the outputs $\x^N$ and $\y^N$ of Algorithm \ref{Alg:DualStochAlg} satisfy 
\begin{align}
    F(\x^N) - F(\x^*) \leq \e, \quad \pd{\|\sqrt{W}\x^N\|_2 \leq \frac{\e}{R_\y}}
\end{align}
with probability at least $ 1-\ed{4}\delta$, \pd{where $\delta \in \left(\pd{0},\nicefrac{1}{\ed{4}}\right)$}, \eg{$\ln(\nicefrac{N}{\delta}) \ge 3$}.
Moreover, the number of stochastic oracle calls for the dual function $\nabla \vp_i$ per node $i=1,\dots m$ is
{
\begin{align*}
    O\left(\max\left\{  \frac{\sigma_{\psi}^2 M_F^2}{\e^2\lm_{\min}^+(W)} \ln\left(\frac{1}{\delta}\sqrt{\frac{M^2_F}{\mu\e}\chi(W)} \right), \sqrt {\frac{M^2_F}{\mu\e}\chi(W)} \right\} \right)
\end{align*}
}
\end{theorem}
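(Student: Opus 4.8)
The plan is to run the standard primal--dual accelerated analysis encoded in Lemma~\ref{lem:psi_y_N_bound} and then absorb every stochastic error through the concentration machinery of Lemmas~\ref{lem:alpha_estimate}--\ref{lem:tails_estimate}.

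\textbf{Step 1 (the gap inequality).} I would start from Lemma~\ref{lem:psi_y_N_bound}, which already isolates, besides the telescoping term $\tfrac12\|\Blm-\Bzeta^0\|_2^2-\tfrac12\|\Blm-\Bzeta^N\|_2^2$, a linearized dual part and the two stochastic parts. Writing $\eta^{k+1}=\nabla\Psi(\Blm^{k+1},\Bxi^{k+1})-\nabla\psi(\Blm^{k+1})$ and splitting $\nabla\Psi=\nabla\psi+\eta^{k+1}$, the deterministic piece can be simplified by Demyanov--Danskin and Fenchel--Young: since $\psi(\y)=\max_{\x}\{\la\sqrt{W}\y,\x\ra-F(\x)\}$ with $\nabla\psi(\Blm^{k+1})=\sqrt{W}\x(\sqrt{W}\Blm^{k+1})$, one has $\psi(\Blm^{k+1})+\la\nabla\psi(\Blm^{k+1}),\Blm-\Blm^{k+1}\ra=\la\sqrt{W}\x^{k+1},\Blm\ra-F(\x^{k+1})$. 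Summing with weights $\alpha_{k+1}$, using $\Bzeta^0=0$, convexity of $F$ and Jensen's inequality on the averaged output $\x^N=\tfrac1{A_N}\sum_k\alpha_k\x^k$, I obtain, for \emph{every} $\Blm$,
\[
A_N\big(F(\x^N)+\psi(\y^N)\big)\le\tfrac12\|\Blm\|_2^2+A_N\la\sqrt{W}\x^N,\Blm\ra+\mathcal{E}_N(\Blm),
\]
where $\mathcal{E}_N(\Blm)$ collects the inner-product sums (the middle line of \eqref{eq:psi_y_N_bound} plus the residual $\sum_k\alpha_{k+1}\la\eta^{k+1},\Blm-\Blm^{k+1}\ra$) and the variance part $\sum_k\tfrac{A_{k+1}}{2L_\psi}\|\eta^{k+1}\|_2^2$.

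\textbf{Step 2 (concentration of $\mathcal{E}_N$).} Taking $\Blm=\y^*$ and moving $-\tfrac12\|\y^*-\Bzeta^N\|_2^2$ to the left produces exactly the recurrence \eqref{eq:radius_recurrence} with $R_l=\|\y^*-\Bzeta^l\|_2$ and $A=\tfrac12\|\y^*\|_2^2\le\tfrac12 R_\y^2$. The crucial matching uses the identity \eqref{eq:tilde_y^k+1-z^k_relation}, $A_k(\y^k-\Blm^{k+1})=\alpha_{k+1}(\Blm^{k+1}-\Bzeta^k)$, which turns the middle sum into $\sum_k\alpha_{k+1}\la\eta^{k+1},a^k\ra$ with $a^k=\Blm^{k+1}-\Bzeta^k$, a past-measurable vector of norm $\le d\widetilde R_k$. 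The noise $\eta^{k+1}$ is zero-mean and light-tailed with $\sigma_k^2=\sigma_\psi^2/r_k$, so the prescribed batch size $r_k=\Omega(\sigma_\psi^2\alpha_k\ln(N/\delta)/\e)$ gives $\sigma_k^2\le C\e/(\widetilde\alpha_{k+1}\ln(N/\delta))$ with $\widetilde\alpha_{k+1}=(k+2)/(2L_\psi)$ from Lemma~\ref{lem:alpha_estimate} and $L_\psi=\lambda_{\max}(W)/\mu$; the target $N=\widetilde O(\sqrt{(M_F^2/\mu\e)\chi(W)})$ gives $\e\le HR_\y^2/N^2$. These are precisely the hypotheses of Lemma~\ref{lem:tails_estimate} (with $R_0=R_\y$), whose conclusion yields, with probability $\ge1-2\delta$, the radius bound $\widetilde R_l\le JR_\y$ together with $\mathcal{E}_N(\y^*)\le O(R_\y^2)$.

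\textbf{Step 3 (extracting both guarantees).} Substituting $\mathcal{E}_N\le O(R_\y^2)$ and choosing $\Blm=0$, strong duality $\psi(\y^N)\ge\psi(\y^*)=-F(\x^*)$ gives $F(\x^N)-F(\x^*)\le\mathcal{E}_N/A_N\le O(R_\y^2)/A_N\le\e$, which fixes $N\asymp\sqrt{L_\psi R_\y^2/\e}=\sqrt{(M_F^2/\mu\e)\chi(W)}$ via $R_\y^2=M_F^2/\lm_{\min}^+(W)$ (as in the proof of Theorem~\ref{thm:3}). For feasibility I would use the usual trick: replacing $\Blm\to-\Blm$ and maximizing $\la\sqrt{W}\x^N,\Blm\ra$ over $\|\Blm\|_2\le 2R_\y$ gives $2R_\y\|\sqrt{W}\x^N\|_2$ on the right, while KKT stationarity $\nabla F(\x^*)=-\sqrt{W}\y^*$ and $\sqrt{W}\x^*=0$ give the lower bound $F(\x^N)-F(\x^*)\ge-R_\y\|\sqrt{W}\x^N\|_2$; combining leaves $R_\y\|\sqrt{W}\x^N\|_2\le O(R_\y^2)/A_N\le\e$, i.e.\ $\|\sqrt{W}\x^N\|_2\le\e/R_\y$. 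The radius bound $\widetilde R_l\le JR_\y$ keeps all iterates inside $B_{R_F}(0)$, legitimizing the use of the local $L_F$-Lipschitz constant to pass between the stochastic output and the deterministic argmax points of the Fenchel identity; this mismatch, together with the extra $\Blm$-dependent residual controlled over the ball, contributes further light-tail events, which with the $1-2\delta$ of Step~2 accounts for the final probability $1-4\delta$.

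\textbf{Complexity and the main obstacle.} The oracle count is $\sum_{k=1}^N r_k\lesssim N+\tfrac{\sigma_\psi^2\ln(N/\delta)}{\e L_\psi}\sum_k(k+2)\asymp N+\tfrac{\sigma_\psi^2\ln(N/\delta)}{\e L_\psi}N^2$; substituting $N$, $L_\psi=\lambda_{\max}(W)/\mu$ and $\sigma_\psi^2=\lambda_{\max}(W)\sigma_\x^2$ collapses the second term to $O\big(\tfrac{\sigma_\psi^2 M_F^2}{\e^2\lm_{\min}^+(W)}\ln(\cdot)\big)$, giving the stated maximum. The hard part is Step~2: the error $\mathcal{E}_N$ is \emph{self-referential}, since each $\eta^{k+1}$ is paired with $a^k$ of size $\widetilde R_k$, the very quantity being bounded, so a naive martingale bound diverges. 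The resolution, already packaged in Lemma~\ref{lem:tails_estimate}, is a two-stage argument: first a deliberately crude exponential-in-$N$ a priori bound on $\widetilde R_l$ to supply a legitimate envelope $B$ for the Bernstein-type Lemma~\ref{lem:jin_corollary}, then a refined recurrence $\tfrac12\widetilde R_l^2\le(\cdot)R_\y^2+(\cdot)\tfrac{R_\y}{N}\sqrt{\sum_k(k+2)\widetilde R_k^2}$ closed by Lemma~\ref{lem:new_recurrence_lemma}; this works only because the batch sizes are tuned so that $\sigma_k^2\widetilde\alpha_{k+1}=O(\e/\ln(N/\delta))$.
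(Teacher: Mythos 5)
Your proposal is correct and follows essentially the same route as the paper's proof: the same master inequality from Lemma~\ref{lem:psi_y_N_bound}, the same reduction of the noise terms to the self-referential recurrence resolved by Lemmas~\ref{lem:new_recurrence_lemma} and~\ref{lem:tails_estimate} with $R_0=R_\y$, the same separate concentration events for the $\max$ over $B_{2R_\y}(0)$ and for the mismatch between the stochastic average $\x^N$ and the exact-argmax average (handled via the local $L_F$-Lipschitz bound on $B_{R_F}(0)$), and the same weak-duality extraction of both the function gap and the feasibility bound, totalling $1-4\delta$. The only deviations are cosmetic bookkeeping (e.g.\ keeping $a^k=\Blm^{k+1}-\Bzeta^k$ and the $\Blm$-residual as separate sums rather than merging them into $\a^k=\Blm^*-\Bzeta^k$), which do not change the argument.
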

\begin{proof}
From Lemma~\ref{lem:psi_y_N_bound} we have
\begin{eqnarray}
     A_N\psi(\y^N)  &\leq& \frac{1}{2}\|\Blm - \Bzeta^0\|_2^2 - \frac{1}{2}\|\Blm - \Bzeta^N\|_2^2 + \sum\limits\limits_{k=0}^{N-1} \alpha_{k+1} \left( \psi(\Blm^{k+1}) + \la \nabla \Psi(\Blm^{k+1}, \Bxi^{k+1}), \Blm - \Blm^{k+1}\ra \right)  \notag \\ 
 &&\quad+  \sum\limits_{k=0}^{N-1}A_{k}\la   \nabla \Psi(\Blm^{k+1}- \nabla \psi(\Blm^{k+1}, \Bxi^{k+1}), \y^k- \Blm^{k+1} \ra \notag \\
 &&\quad+  \sum\limits_{k=0}^{N-1}\frac{A_{k+1}}{2\pd{L_\psi}}\|\nabla \psi(\Blm^{k+1}) -\nabla \Psi(\Blm^{k+1}, \Bxi^{k+1})\|_{2}^2.\label{eq:stoch_primal_dual_cond}
 \end{eqnarray}

From definition of $\Blm^{k+1}$ (see \eqref{eq:Alg_lambda}) we have
\begin{eqnarray}\label{eq:y_blm}
    \alpha_{k+1}\left(\Blm^{k+1} - \Bzeta^k\right) = A_k\left(\y^k - \Blm^{k+1}\right).
\end{eqnarray}
Using this, we add and subtract $\sum_{k=0}^{N-1}\alpha_{k+1}\la \nabla\psi(\Blm^{k+1}), \Blm^* - \Blm^{k+1}\ra$ in \eqref{eq:stoch_primal_dual_cond}, and obtain
by choosing $\Blm = \Blm^*$
 \begin{eqnarray}
A_N\psi(\y^N)  &\leq& \frac{1}{2}\|\Blm^* - \Bzeta^0\|_2^2 - \frac{1}{2}\|\Blm^* - \Bzeta^N\|_2^2 + \sum_{k=0}^{N-1} \alpha_{k+1} \left( \psi(\Blm^{k+1}) + \la \nabla \psi(\Blm^{k+1}), \Blm^* - \Blm^{k+1}\ra \right)  \notag \\ 
&& +  \sum_{k=0}^{N-1}\alpha_{k+1}\la \nabla \Psi(\Blm^{k+1}, \Bxi^{k+1}) - \nabla \psi(\Blm^{k+1}), \a^k\ra \notag \\
&& +  \sum_{k=0}^{N-1}\alpha_{k+1}^2\|\nabla \psi(\Blm^{k+1}) -\nabla \Psi(\Blm^{k+1}, \Bxi^{k+1})\|_{2}^2,\label{eq:stoch_primal_dual_cond2}
\end{eqnarray}
where $\a^k = \Blm^* - \Bzeta^k$.
From convexity of $\psi$ we have 
\begin{eqnarray}
 \sum_{k=0}^{N-1} \alpha_{k+1} \left( \psi(\Blm^{k+1}) + \la \nabla \psi(\Blm^{k+1}), \Blm^* - \Blm^{k+1}\ra \right) &\le& \sum_{k=0}^{N-1} \alpha_{k+1} \left( \psi(\Blm^{k+1}) + \psi(\Blm^{*}) - \psi(\Blm^{k+1}) \right) \notag\\
&=& \psi(\Blm^{*})\sum_{k=0}^{N-1}\alpha_{k+1} = A_N\psi(\Blm^{*}) \le A_N\psi(\y^{N})  \notag
\end{eqnarray}

From this and \eqref{eq:stoch_primal_dual_cond2} we get
\begin{eqnarray}
 \frac{1}{2}\|\Blm^* - \Bzeta^N\|_2^2 &\overset{\eqref{eq:stoch_primal_dual_cond2}}{\le}& \frac{1}{2}\|\Blm^* - \Bzeta^0\|_2^2 +  \sum_{k=0}^{N-1}\alpha_{k+1}\la \nabla \Psi(\Blm^{k+1}, \Bxi^{k+1}) - \nabla \psi(\Blm^{k+1}), \a^k\ra \notag \\
&&\quad +  \sum_{k=0}^{N-1}\alpha_{k+1}^2\|\nabla \psi(\Blm^{k+1}) -\nabla \Psi(\Blm^{k+1}, \Bxi^{k+1})\|_{2}^2.\label{eq:stoch_primal_dual_cond3}
\end{eqnarray}

Next step we introduce sequences $\{R_k\}_{k\ge 0}$ and $\{\widetilde{R}_k\}_{k\ge 0}$ as follows
\begin{align*}
    R_k = \|\Bzeta_k - \Blm^*\|_2  \text{ and }  \widetilde{R}_k = \max\left\{\widetilde{R}_{k-1}, R_k\right\}, \widetilde{R}_0 = R_0.
\end{align*}
Since $\Bzeta^0=0$ in Algorithm \ref{Alg:DualStochAlg}, then $R_0 = R_\y$, where $R_\y$ is such that $\|\Blm^*\|_2 \leq R_\y$.
One can obtain by induction that  $\forall k\geq 0 ~\Blm^{k+1},\y^k,\Bzeta^k\in B_{\widetilde{R}_{k}}(\Blm^*) $, where  $B_{\widetilde{R}_{k}}(\Blm^*)$ is Euclidean ball with radius $\widetilde{R}_{k}$ \pd{and center} $\Blm^*$. Indeed, since from \eqref{eq:Alg_y} \pd{$\y^{k+1}$}
is a convex combination of $\pd{\Bzeta^{k+1}}\in B_{R_{k+1}}(\Blm^*) \subseteq B_{\widetilde{R}_{k+1}}(\Blm^*)$ and $\y^k\in B_{\widetilde{R}_k}(\Blm^*)\subseteq B_{\widetilde{R}_{k+1}}(\Blm^*)$, where we use the fact that a ball is a convex set, we get $\y^{k+1}\in B_{\widetilde{R}_{k+1}}(\Blm^*)$. Analogously, since from \eqref{eq:Alg_lambda} $\Blm^{k+1} $
is a convex combination of $\y^k$ and $\Bzeta^k$ we have $\Blm^{k+1}\in B_{\widetilde{R}_{k}}(\Blm^*)$. Using new notation we can rewrite \eqref{eq:stoch_primal_dual_cond3} as  
\begin{eqnarray}
     \frac{1}{2}R_N^2 &\le& \frac{1}{2}R_\y^2 + \sum\limits_{k=0}^{N-1}\alpha_{k+1}\la \nabla \Psi(\Blm^{k+1}, \Bxi^{k+1}) - \nabla \psi(\Blm^{k+1}), \a^k\ra \notag \\
 &&\quad +  \sum\limits_{k=0}^{N-1}\alpha_{k+1}^2\|\nabla \psi(\Blm^{k+1}) -\nabla \Psi(\Blm^{k+1}, \Bxi^{k+1})\|_{2}^2,\label{eq:radius_for_prima_dual}
\end{eqnarray}
where $\|\a^k\|_2 = \|\Blm^* - \Bzeta^k\|_2 \le \widetilde{R}_k$.

Let us denote $\eta^{k+1} = \nabla \Psi(\Blm^{k+1}, \Bxi^{k+1}) - \nabla \psi(\Blm^{k+1}) 
$. Theorem~2.1 from \cite{juditsky2008large} (see Lemma~\ref{lem:jud_nem_large_dev} in the Section~\ref{sec:aux_results}) says that
{\begin{align*}
&\PP\left\{\|\eta^k\|_2 \ge \left(\sqrt{2} + \sqrt{2}\gamma\right)\sqrt{\frac{\sigma_\psi^2}{r_{k+1}}}\mid\{\eta^j\}_{j=0}^{k-1} \right\} \le \exp\left(-\frac{\gamma^2}{3}\right).
\end{align*}}
Using this and Lemma~2 from \cite{jin2019short} (see Lemma~\ref{lem:jin_lemma_2} in the Section~\ref{sec:aux_results}) we get that $\EE\left[\exp\left({\frac{\|\eta^k\|_2^2}{\sigma_k^2}}\right)|\{\eta^j\}_{j=0}^{k-1}\right] \le \exp(1)$, where $\sigma_k^2 \le \frac{\widetilde{C}\sigma_\psi^2}{r_{k+1}} \le \frac{C\varepsilon}{\widetilde{\alpha}_{k+1}\ln(\frac{N}{\delta})}$, \pdd{where $\widetilde{\alpha}_{k+1}$ is defined in \eqref{eq:alpha_estimate_appendix}}, $\widetilde{C}$ and $C$ are some positive constants.  Moreover, $\a^k$ depends only on $\eta^0,\ldots,\eta^{k-1}$. Putting all together in \eqref{eq:radius_for_prima_dual} and changing the indices we get, \pd{for all $l =1,...,N$,}
\begin{equation*}
    \frac{1}{2}R_l^2 \le \frac{1}{2}R_\y^2 + \sum\limits_{k=0}^{l-1}\alpha_{k+1}\la\eta^{k+1},\a^k\ra + \sum\limits_{k=0}^{l-1}\alpha_{k+1}\|\eta^{k+1}\|_2^2.
\end{equation*}
Next we apply the Lemma~\ref{lem:tails_estimate} with the constants $A = \frac{1}{2}R_0^2, u = 1, c = 1, D = \frac{1}{2L}, d = 1$ and using $\varepsilon \le \frac{HLR_0^2}{N^2}$ which holds for some positive constant $H$ due to our choice of $N$, and get that with probability at least $1-2\delta$ the inequalities
\begin{eqnarray}
    \widetilde{R}_l &\le& JR_\y \quad \text{ and } \label{eq:bounding_tilde_R_l} \\
    \sum\limits_{k=0}^{l-1}\alpha_{k+1}\la\eta^k,\a^k \ra + \sum\limits_{k=0}^{l-1}\alpha_{k+1}^2\|\eta^k\|_2^2 &\le& \left(12CH + C_1\sqrt{\frac{CHJg(N)}{2}}\right)R_\y^2,\label{eq:stoch_terms_estimation}
\end{eqnarray}
hold for all $l=1,\ldots,N$ simultaneously, where $C_1$ is some positive constant, $g(N) = \frac{\ln\left(\frac{N}{\delta}\right) + \ln\ln\left(\frac{B}{b}\right)}{\ln\left(\frac{N}{\delta}\right)}$, $B = CHR_0^2\left(2R_0^2 + \frac{18C}{L}\varepsilon N(N+3)\right)2^N$, $b = \sigma_0^2\widetilde{\alpha}_{1}^2R_0^2$ and $$J = \max\left\{1, C_1\sqrt{\frac{CH g(N)}{2}} + \sqrt{\frac{C_1^2CH g(N)}{2} + 1 + 24CH}\right\}.
$$

To estimate the duality gap we need again refer to \eqref{eq:stoch_primal_dual_cond}. Since $\Blm$ is chosen arbitrary we can take the minimum in $\Blm$ by the set $B_{2R_\y}(0) = \{\Blm: \|\Blm\|_2\leq 2R_\y\}$
{
\begin{eqnarray}
A_N\psi(\y^N)  &\leq&  \min\limits_{\Blm \in B_{\pd{2}R_\y}(0)} \left\{ \frac{1}{2}\|\Blm - \Bzeta^0\|_2^2  + \sum\limits_{k=0}^{N-1} \alpha_{k+1} \left( \psi(\Blm^{k+1})  + \la \nabla \Psi(\Blm^{k+1}, \Bxi^{k+1}), \Blm - \Blm^{k+1}\ra \right) \right\} \notag \notag \\ 
 &&\quad +  \sum_{k=0}^{N-1}A_{k}\la \nabla \Psi(\Blm^{k+1}, \Bxi^{k+1})-\nabla \psi(\Blm^{k+1}), \pd{\y^k - \Blm^{k+1} } \ra \notag \\
 &&\quad  +  \sum_{k=0}^{N-1}\frac{A_{k}}{2\pd{L_\psi}}\|\nabla \psi(\Blm^{k+1}) -\nabla \Psi(\Blm^{k+1}, \Bxi^{k+1})\|_{2}^2\notag \notag \\
      &\overset{\eqref{eq:y_blm}}{\le}& 2R_\y^2 + \min\limits_{\Blm \in B_{\pd{2}R_\y}(0)}  \sum_{k=0}^{N-1} \alpha_{k+1} \left( \psi(\Blm^{k+1}) + \la \nabla \Psi(\Blm^{k+1}, \Bxi^{k+1}), \Blm - \Blm^{k+1}\ra \right)  \notag \\ 
 &&\quad+  \sum_{k=0}^{N-1}\alpha_{k+1}\la \nabla \Psi(\Blm^{k+1}, \Bxi^{k+1})-\nabla \psi(\Blm^{k+1}), \Blm^{k+1} - \Bzeta^k\ra \notag \\
 &&\quad +  \sum\limits_{k=0}^{N-1}\frac{A_{k+1}}{2\pd{L_\psi}}\|\nabla \psi(\Blm^{k+1}) -\nabla \Psi(\Blm^{k+1}, \Bxi^{k+1})\|_{2}^2,\label{eq:stoch_primal_dual_cond_min}
\end{eqnarray}
}  where we also used $\frac{1}{2}\|\Blm-\Bzeta^N\|^2_2\geq 0$ and $\Bzeta_0=0$. By adding and subtracting $\sum_{k=0}^{N-1}\alpha_{k+1}\la \nabla\psi(\Blm^{k+1}), \Blm^* - \Blm^{k+1}\ra$ under minimum in \eqref{eq:stoch_primal_dual_cond_min} we obtain
\begin{eqnarray}
     \min\limits_{\Blm \in B_{2R_\y}(0)}  \sum\limits_{k=0}^{N-1} \alpha_{k+1} \left( \psi(\Blm^{k+1}) + \la \nabla \Psi(\Blm^{k+1}, \Bxi^{k+1}), \Blm - \Blm^{k+1}\ra \right)& \notag \\
     &\hspace{-3cm} \le \min\limits_{\Blm \in B_{2R_\y}(0)}  \sum_{k=0}^{N-1} \alpha_{k+1} \left( \psi(\Blm^{k+1})+ \la \nabla \psi(\Blm^{k+1}), \Blm - \Blm^{k+1}\ra \right) \notag\\
     &\hspace{-3cm}+ \max\limits_{\Blm\in B_{2R_\y}(0)}\sum\limits_{k=0}^{N-1} \alpha_{k+1}\la\nabla \Psi(\Blm^{k+1}, \Bxi^{k+1}) - \nabla\psi(\Blm^{k+1}), \Blm\ra \notag \\
     &\hspace{-3.62cm}+ \sum\limits_{k=0}^{N-1} \alpha_{k+1}\la\nabla \Psi(\Blm^{k+1}, \Bxi^{k+1}) - \nabla\psi(\Blm^{k+1}), -\Blm^{k+1}\ra\notag.
\end{eqnarray}
Since $-\Blm^*\in B_{2R_\y}(0)$ we have that
\begin{eqnarray}
      \sum\limits_{k=0}^{N-1} \alpha_{k+1}\la\nabla \Psi(\Blm^{k+1}, \Bxi^{k+1}) - \nabla\psi(\Blm^{k+1}), -\Blm^{k+1}\ra &=& \sum\limits_{k=0}^{N-1} \alpha_{k+1}\la\nabla \Psi(\Blm^{k+1}, \Bxi^{k+1}) - \nabla\psi(\Blm^{k+1}), \Blm^*-\Blm^{k+1}\ra \notag \\
      &&\quad + \sum\limits_{k=0}^{N-1} \alpha_{k+1}\la\nabla \Psi(\Blm^{k+1}, \Bxi^{k+1}) - \nabla\psi(\Blm^{k+1}), -\Blm^{*}\ra \notag \\
      &\le& \sum\limits_{k=0}^{N-1} \alpha_{k+1}\la\nabla \Psi(\Blm^{k+1}, \Bxi^{k+1}) - \nabla\psi(\Blm^{k+1}), \Blm^*-\Blm^{k+1}\ra \notag \\
      &&\quad + \max\limits_{\Blm\in B_{2R_\y}(0)}\sum\limits_{k=0}^{N-1} \alpha_{k+1}\la\nabla \Psi(\Blm^{k+1}, \Bxi^{k+1}) - \nabla\psi(\Blm^{k+1}), \Blm\ra.\notag
\end{eqnarray}
Putting all together in \eqref{eq:stoch_primal_dual_cond_min} and using \eqref{eq:Alg_const} we get
{
\begin{eqnarray}
    A_N\psi(\y^N)  &\leq& 2R_\y^2 + \min\limits_{\Blm \in B_{2R_\y}(0)}  \sum\limits_{k=0}^{N-1} \alpha_{k+1} \left( \psi(\Blm^{k+1}) +  \la \nabla \psi(\Blm^{k+1}), \Blm - \Blm^{k+1}\ra \right)  \notag \\ 
  &&\quad+ 2\max\limits_{\Blm\in B_{2R_\y}(0)}\sum\limits_{k=0}^{N-1} \alpha_{k+1}\la\nabla \Psi(\Blm^{k+1}, \Bxi^{k+1}) - \nabla\psi(\Blm^{k+1}), \Blm\ra\notag\\
 &&\quad+  \sum_{k=0}^{N-1}\alpha_{k+1}\la \nabla \Psi(\Blm^{k+1}, \Bxi^{k+1})-\nabla \psi(\Blm^{k+1}), \a^k\ra \notag \\
 &&\quad+  \sum_{k=0}^{N-1}\alpha_{k+1}^2\|\nabla \psi(\Blm^{k+1}) -\nabla \Psi(\Blm^{k+1}, \Bxi^{k+1})\|_{2}^2,\label{eq:stoch_primal_dual_cond_min2}
\end{eqnarray}
} \noindent
where $\a^k = \Blm^* - \Bzeta^k$. From \eqref{eq:stoch_terms_estimation} we have that with probability at least $1-2\delta$ the following inequality holds:
{
\begin{eqnarray}
    A_N\psi(\y^N)  &\leq& \min\limits_{\Blm \in B_{2R_\y}(0)}  \sum_{k=0}^{N-1} \alpha_{k+1} \left( \psi(\Blm^{k+1}) + \la \nabla \psi(\Blm^{k+1}), \Blm - \Blm^{k+1}\ra \right)  \notag \\ 
  &&\quad+ 2\max\limits_{\Blm\in B_{2R_\y}(0)}\sum\limits_{k=0}^{N-1} \alpha_{k+1}\la\nabla \Psi(\Blm^{k+1}, \Bxi^{k+1}) {-}\nabla\psi(\Blm^{k+1}), \Blm\ra \notag\\
 &&\quad+ 2R_\y^2 {+}  \left(12CH + C_1\sqrt{\frac{CHJg(N)}{2}}\right)R^2_\y.\label{eq:stoch_primal_dual_cond_min3}
\end{eqnarray}
}
By the definition of the norm we get
{
\begin{eqnarray}
      \max\limits_{\Blm\in B_{2R_\y}(0)}\sum\limits_{k=0}^{N-1} \alpha_{k+1}\la\nabla \Psi(\Blm^{k+1}, \Bxi^{k+1}) - \nabla\psi(\Blm^{k+1}), \Blm\ra \leq  2R_{\y}\left\|\sum\limits_{k=0}^{N-1}\alpha_{k+1}(\nabla \Psi(\Blm^{k+1}, \Bxi^{k+1}){-} \nabla \psi(\Blm^{k+1})\right\|_2.\label{eq:maximum_estimate_first_step}
\end{eqnarray}
}
Next we apply Lemma~\ref{lem:jud_nem_large_dev} to the r.h.s of previous inequality and get
\begin{eqnarray}
    \PP\left\{\left\|\sum_{k=0}^{N-1}\alpha_{k+1}(\nabla \Psi(\Blm^{k+1}, \Bxi^{k+1}) - \nabla \psi(\Blm^{k+1})\right\|_2 \ge \left(\sqrt{2} + \sqrt{2}\gamma\right)\sqrt{\sum\limits_{k=0}^{N-1}\alpha_{k+1}^2\frac{\sigma_\psi^2}{r_{k+1}}}\right\} \le \exp\left(-\frac{\gamma^2}{3}\right). \notag 
\end{eqnarray}
Since $N^2\le \frac{HL_\psi R_0^2}{\e}$ and $r_k = \Omega\left(\max\left\{1,\frac{ \sigma^2_\psi {\alpha}_k \ln(N/\delta)}{\e}\right\}\right)$ one can choose such $C_2 > 0$ that $\frac{\sigma_\psi^2}{r_k} \le \frac{C_2\varepsilon}{\alpha_k\ln\left(\frac{N}{\delta}\right)} \le \frac{HL_\psi C_2R_0^2}{\alpha_k N^2\ln\left(\frac{N}{\delta}\right)}$. Let us choose $\gamma$ such that $\exp\left(-\frac{\gamma^2}{3}\right) = \delta:$ $\gamma = \sqrt{3\ln(\nicefrac{1}{\delta})}$. From this we get that \pd{with} probability at least $1-\delta$
\begin{eqnarray}
\left\|\sum_{k=0}^{N-1}\alpha_{k+1}(\nabla \Psi(\Blm^{k+1}, \Bxi^{k+1}) - \nabla \psi(\Blm^{k+1}))\right\|_2  &\le& \sqrt{2}\left(1 + \sqrt{\ln\frac{1}{\delta}}\right)R_y\sqrt{\frac{HL_\psi C_2}{\ln\left(\frac{N}{\delta}\right)}}\sqrt{\sum\limits_{k=0}^{N-1}\frac{\alpha_{k+1}}{N^2}}\notag\\
    &\overset{\eqref{eq:alpha_estimate}}{\le}& 2\sqrt{2}R_y\sqrt{HL_\psi C_2}\sqrt{\sum\limits_{k=0}^{N-1}\frac{k+2}{2L_\psi N^2}}
    = 2R_y\sqrt{HC_2}\sqrt{\frac{N(N+3)}{N^2}}\notag\\
    &\le& 4R_\y\sqrt{{C_2}}\label{eq:maximum_estimate}.
\end{eqnarray}

Putting all together and using union bound we get that with probability at least $1-3\delta$
\begin{eqnarray}
    A_N\psi(\y^N)  
    &\overset{\eqref{eq:stoch_primal_dual_cond_min3}+\eqref{eq:maximum_estimate_first_step}+\eqref{eq:maximum_estimate}}{\le}& \min\limits_{\Blm \in B_{2R_\y}(0)}  \sum\limits_{k=0}^{N-1} \alpha_{k+1} \left( \psi(\Blm^{k+1}) + \la \nabla \psi(\Blm^{k+1}), \Blm - \Blm^{k+1}\ra \right)  \notag \\ 
    &&\quad+\left(8 \sqrt{HC_2}+  2 + 12CH + C_1\sqrt{\frac{CHJg(N)}{2}}\right)R_y^2.\label{eq:pr_dual_pre_final_estimate}
\end{eqnarray}

\ed{This brings us to the final part of the proof. Firstly, by definition of $\psi(\Blm^k)$ and Demyanov--Danskin's theorem we have
\begin{eqnarray}
    \psi(\Blm^k) - \la\nabla \psi(\Blm^k),\Blm^k \ra &=& \la\Blm^k, \sqrt{W}\x(\sqrt{W}\Blm^k) \ra - F(\x(\sqrt{W}\Blm^k)) - \la\nabla \psi(\Blm^k),\Blm^k \ra \notag\\
    &=& - F(\x(\sqrt{W}\Blm^k)).\notag
\end{eqnarray}
Summing up this equality for $k=1,\ldots, N$ with weights $\alpha_{k}$ and using convexity of $F$ we get
\begin{eqnarray}
    \sum\limits_{k=0}^{N-1}\alpha_{k+1}(\psi(\Blm^{k+1}) - \la\nabla\psi(\Blm^{k+1}), \Blm^{k+1}\ra) &=& -A_N\sum\limits_{k=0}^{N-1}\frac{\alpha_{k+1}}{A_N}F(\x(\sqrt{W}\Blm^{k+1}))\notag\\
    &\le& -A_NF\left(\sum\limits_{k=0}^{N-1}\frac{\alpha_{k+1}}{A_N}\x(\sqrt{W}\Blm^{k+1})\right) = -A_NF(\hat \x_N),\label{eq:pr_dual_Lend_of_the_proof_1}
\end{eqnarray}
where $\hat \x_N \eqdef \frac{1}{A_N}\sum_{k=0}^{N-1}\alpha_{k+1}\x(\sqrt{W}\Blm^{k+1})$.
Secondly, by definition of the norm
\begin{eqnarray}
    \min\limits_{\Blm\in B_{2R_{\y}}(0)}\sum\limits_{k=0}^{N-1}\alpha_{k+1}\left\la\nabla\psi(\Blm^{k+1}), \Blm\right\ra &=& \min\limits_{\Blm\in B_{2R_{\y}}(0)}\left\la\sum\limits_{k=0}^{N-1}\alpha_{k+1}\nabla\psi(\Blm^{k+1}), \Blm\right\ra\notag\\
    &=&-2R_{\y}A_N\left\|\frac{1}{A_N}\sum\limits_{k=0}^{N-1}\alpha_{k+1}\nabla\psi(\Blm^{k+1})\right\|_2\notag\\
    &=&-2R_{\y}A_N\left\|\frac{1}{A_N}\sum\limits_{k=0}^{N-1}\alpha_{k+1}\sqrt{W}\x(\sqrt{W}\Blm^{k+1})\right\|_2\notag\\
    &=&-2R_{\y}A_N\|\sqrt{W}\hat\x^N\|_2.\label{eq:pr_dual_Lend_of_the_proof_2}
\end{eqnarray}}

\ed{Combining inequalities \eqref{eq:pr_dual_pre_final_estimate}, \eqref{eq:pr_dual_Lend_of_the_proof_1} and \eqref{eq:pr_dual_Lend_of_the_proof_2} we obtain that with probability at least $1 - 3\delta$
\begin{eqnarray}
    A_N\psi(\y^N) &\overset{\eqref{eq:pr_dual_pre_final_estimate}}{\le}& \sum\limits_{k=0}^{N-1}\alpha_{k+1}(\psi(\Blm^{k+1}) - \la\nabla\psi(\Blm^{k+1}), \Blm^{k+1}\ra) + \min\limits_{\Blm\in B_{2R_{\y}}(0)}\sum\limits_{k=0}^{N-1}\alpha_{k+1}\left\la\nabla\psi(\Blm^{k+1}), \Blm\right\ra\notag\\
    &&\quad + \left(8 \sqrt{HC_2}+  2 + 12CH + C_1\sqrt{\frac{CHJg(N)}{2}}\right)R_y^2\notag\\
    &\overset{\eqref{eq:pr_dual_Lend_of_the_proof_1} + \eqref{eq:pr_dual_Lend_of_the_proof_2}}{\le}& -A_NF(\hat \x^N) - 2R_\y A_N\|\sqrt{W}\hat\x_N\|_2 + \left(8 \sqrt{HC_2}+  2 + 12CH + C_1\sqrt{\frac{CHJg(N)}{2}}\right)R_y^2.\label{eq:pr_dual_almost_finish}
\end{eqnarray}}

\ed{Lemma~\ref{lem:jud_nem_large_dev} states that for all $\gamma > 0$
\begin{equation*}
    \PP\left\{\left\|\sum_{k=0}^{N-1}\alpha_{k+1}\left(\x(\sqrt{W}\Blm^{k+1},\Bxi^{k+1}) - \x(\sqrt{W}\Blm^{k+1})\right)\right\|_2 \ge (\sqrt{2} + \sqrt{2}\gamma)\sqrt{\sum\limits_{k=0}^{N-1}\frac{\alpha_{k+1}^2\sigma_{\x}^2}{r_{k+1}}}\right\} \le \exp\left(-\frac{\gamma^2}{3}\right).
\end{equation*}
Taking $\gamma = \sqrt{3\ln\frac{1}{\delta}}$ and using $r_k \ge \frac{\sigma_\psi^2\alpha_k\ln\frac{N}{\delta}}{C_2\varepsilon}$ we get that with probability at least $1 - \delta$
\begin{eqnarray}
    \|\x^N - \hat\x^N\|_2 &=& \frac{1}{A_N}\left\|\sum\limits_{k=0}^{N-1}\alpha_{k+1}\left(\x(\sqrt{W}\Blm^{k+1},\Bxi^{k+1}) - \x(\sqrt{W}\Blm^{k+1})\right)\right\|_2\notag\\
    &\le& \frac{\sqrt{2}}{A_N}\left(1 + \sqrt{3\ln\frac{1}{\delta}}\right)\sqrt{\sum\limits_{k=0}^{N-1}\frac{\alpha_{k+1}^2\sigma_\x^2}{r_{k+1}^2}}\notag\\
    &\le& \frac{2}{A_N}\sqrt{6\ln\frac{1}{\delta}}\frac{1}{\sqrt{\ln\frac{N}{\delta}}}\sqrt{\sum\limits_{k=0}^{N-1}\frac{C_2\alpha_{k+1}\varepsilon}{\lambda_{\max}(W)}}\notag\\
    &\le& \frac{2}{A_N}\sqrt{\frac{6C_2}{\lambda_{\max}(W)}}\sqrt{\sum\limits_{k=0}^{N-1}\frac{(k+2)HL_\psi R_\y^2}{2L_\psi N^2}} \le \frac{2R_\y}{A_N}\sqrt{\frac{6C_2H}{\lambda_{\max}(W)}}.\label{eq:pr_dual_Lend_of_the_proof_3}
\end{eqnarray}
It implies that with probability at least $1 - \delta$
\begin{eqnarray}
    \|\sqrt{W}\x^N - \sqrt{W}\hat\x^N\|_2 &\le& \|\sqrt{W}\|_2 \cdot \|\x^N - \hat\x^N\|_2\notag\\
    &\overset{\eqref{eq:pr_dual_Lend_of_the_proof_3}}{\le}& \sqrt{\lambda_{\max}(W)}\frac{2R_\y}{A_N}\sqrt{\frac{6C_2H}{\lambda_{\max}(W)}} = \frac{2R_\y}{A_N}\sqrt{6C_2H}\label{eq:pr_dual_Lend_of_the_proof_4}
\end{eqnarray}
and due to triangle inequality with probability $\ge 1 - \delta$
\begin{eqnarray}
    2R_\y A_N\|\sqrt{W}\hat\x^N\|_2 &\ge& 2R_\y A_N\|\sqrt{W}\x^N\|_2 - 2R_\y A_N\|\sqrt{W}\hat\x^N - \sqrt{W}\x^N\|_2\notag\\
    &\overset{\eqref{eq:pr_dual_Lend_of_the_proof_4}}{\ge}& 2R_\y A_N \|\sqrt{W}\x^N\|_2 - 4R_\y^2\sqrt{6C_2H}.\label{eq:pr_dual_Lend_of_the_proof_5}
\end{eqnarray}}

\ed{Now we want to apply Lipschitz-continuity of $F$ on the ball $B_{R_F}(0)$ and specify our choice of $R_F$. Recall that $\x(\Blm) \eqdef \argmax_{\x\in \R^{nm}}\left\{\la\Blm, \x\ra - F(\x)\right\}$ and due to Demyanov-Danskin theorem $\x(\Blm) = \nabla\varphi(\Blm)$. Together with $L_\varphi$-smoothness of $\varphi$ it implies that
\begin{eqnarray}
    \|\x(\sqrt{W}\Blm^{k+1})\|_2 &=& \|\nabla \varphi(\sqrt{W}\Blm^{k+1})\|_2 \le \|\nabla \varphi(\sqrt{W}\Blm^{k+1}) - \nabla\varphi(\sqrt{W}\y^*)\|_2 + \|\nabla\varphi(\sqrt{W}\y^*)\|_2\notag\\
    &\le& L_\varphi\|\sqrt{W}\Blm^{k+1}-\sqrt{W}\y^*\|_2 + \|\x(\sqrt{W}\y^*)\|_2 \le \frac{\lambda_{\max}(\sqrt{W})}{\mu}\|\Blm^{k+1} - \y^*\|_2 + R_\x\notag
\end{eqnarray}
From this and \eqref{eq:bounding_tilde_R_l} we get that with probability at least $1-2\delta$ the inequality
\begin{eqnarray}
    \|\x(\sqrt{W}\Blm^{k+1})\|_2 &\overset{\eqref{eq:bounding_tilde_R_l}}{\le}&  \left(\frac{\lambda_{\max}(\sqrt{W})J}{\mu} + \frac{R_\x}{R_\y}\right)R_\y\label{eq:pr_dual_Lend_of_the_proof_6}
\end{eqnarray}
holds for all $k = 0,1,2,\ldots, N-1$ simultaneously since $\Blm^{k+1}\in B_{R_k}(\y^*) \subseteq B_{\widetilde{R}_{k+1}}(\y^*)$. Using the convexity of the norm we get that with probability at least $1-2\delta$
\begin{eqnarray}
    \|\hat\x^N\|_2 \le \frac{1}{A_N}\sum\limits_{k=0}^{N-1}\alpha_{k+1}\|\x(\sqrt{W}\Blm^{k+1})\|_2 \overset{\eqref{eq:pr_dual_Lend_of_the_proof_6}}{\le} \left(\frac{\lambda_{\max}(\sqrt{W})J}{\mu} + \frac{R_\x}{R_\y}\right)R_\y. \label{eq:pr_dual_Lend_of_the_proof_7}
\end{eqnarray}
We notice that the last inequality lies in the same probability event when \eqref{eq:bounding_tilde_R_l} holds.}

\ed{Consider the probability event $E = \{\text{inequalities } \eqref{eq:pr_dual_almost_finish}-\eqref{eq:pr_dual_Lend_of_the_proof_7} \text{ hold simultaneously}\}$. Using union bound we get that $\PP\{E\} \ge 1 - 4\delta$. Combining \eqref{eq:pr_dual_Lend_of_the_proof_3} and \eqref{eq:pr_dual_Lend_of_the_proof_7} we get that inequality
\begin{equation}
    \|\x^N\|_2 \le \|\x^N-\hat\x^N\|_2 + \|\hat\x^N\|_2 \le \left(\frac{2}{A_N}\sqrt{\frac{6C_2H}{\lambda_{\max}(W)}} + \frac{\lambda_{\max}(\sqrt{W})J}{\mu} + \frac{R_\x}{R_\y}\right)R_\y \label{eq:pr_dual_Lend_of_the_proof_8}
\end{equation}
lies in the event $E$. Here we can specify our choice of $R_F$: $R_F$ should \pdd{be} at least $\left(\frac{2}{A_N}\sqrt{\frac{6C_2H}{\lambda_{\max}(W)}} + \frac{\lambda_{\max}(\sqrt{W})J}{\mu} + \frac{R_\x}{R_\y}\right)R_\y$. Then we get that the fact that points $\x^N$ and $\hat\x^N$ lie in $B_{R_F}(0)$ is a consequence of $E$. Therefore, we can apply Lipschitz-continuity of $F$ for the points $\x^N$ and $\hat\x^N$ and get that inequalities
\begin{eqnarray}
    |F(\hat\x^N) - F(\x^N)| \le L_F\|\hat \x^N - \x^N\|_2 \overset{\eqref{eq:pr_dual_Lend_of_the_proof_3}}{\le} \frac{2L_FR_\y}{A_N}\sqrt{\frac{6C_2H}{\lambda_{\max}(W)}}\label{eq:pr_dual_Lend_of_the_proof_9}
\end{eqnarray}
and
\begin{eqnarray}
    A_NF(\hat\x^N) = A_NF(\x^N) + A_N\left(F(\hat\x^N) - F(\x^N)\right) \overset{\eqref{eq:pr_dual_Lend_of_the_proof_9}}{\ge} A_NF(\x^N) - 2L_FR_\y\sqrt{\frac{6C_2H}{\lambda_{\max}(W)}}\label{eq:pr_dual_Lend_of_the_proof_10}
\end{eqnarray}
also lie in the event $E$. It remains to use inequalities \eqref{eq:pr_dual_Lend_of_the_proof_5} and \eqref{eq:pr_dual_Lend_of_the_proof_10} to bound first and second terms in the right hand side of inequality \eqref{eq:pr_dual_almost_finish} and obtain that with probability at least $1-4\delta$
\begin{eqnarray}
    A_N\psi(\y^N) + A_N F(\x^N) +2R_\y A_N\|\sqrt{W}\x^N\|_2 &\le& \left(4\sqrt{6C_2H} + \frac{2L_F}{R_\y}\sqrt{\frac{6C_2H}{\lambda_{\max}(W)}} + 8 \sqrt{HC_2}\right.\notag\\
    &&\left.\quad\quad\quad\quad\quad + 2 + 12CH + C_1\sqrt{\frac{CHJg(N)}{2}}\right)R_y^2.\label{eq:pr_dual_Lend_of_the_proof_11}
\end{eqnarray}
Using that $A_N$ grows as $\Omega(N^2/\pd{L_\psi})$ \cite{nesterov2004introduction}, $L_\psi\leq \frac{\lm_{\max}(W)}{\mu}$ and, as in the Section~\ref{sec:proof_thm_3}, $R_\y \leq \frac{\|\nabla F(\x^*)\|_2^2}{\lm^+_{\min}(W)}$,  we obtain that the choice of $N$ in the theorem statement guarantees that the r.h.s. of the last inequality is no \pdd{greater} than \pdd{$\e A_N$}. 
\pdd{
By weak duality $-F(\x^*)\leq \psi(\y^*)$, \ed{we have with probability at least $1-4\delta$}
\begin{align}\label{eq:func_math_fin}
	F(\x^N)  - F(\x^*) \leq F(\x^N) + \psi(\y^*) \leq F(\x^N) + \psi(\y^N)  
    &\leq  \e.
\end{align}
Since $\y^*$ is an optimal solution of the dual problem, we have, for any $\x$, $F(\x^*)\leq F(\x) \ed{-} \la \y^*, \sqrt{W}\x \ra$.
Then using assumption $\|\y^*\|_2\leq R_{\y}$\ed{, Cauchy-Schawrz inequality $\la\y,\sqrt{W}\x\ra \ge -\|\y^*\|_2\cdot\|\sqrt{W}\x\|_2 = -R_\y\|\sqrt{W}\x\|_2$} and choosing $\x = \x^N$, we get
\begin{equation}\label{eq:weak_dual}
F(\x^N) \geq F(\x^*) - R_{\y}\|\sqrt{W}\x^N\|_2
\end{equation}
Using this and weak duality $-F(\x^*)\leq \psi(\y^*)$, we obtain
\begin{align*}
\psi(\y^N) +F(\x^N) \geq \psi(\y^*) +F(\x^N) \geq -F(\x^*) +F(\x^N) \geq -R_{\y}\|\sqrt{W}\x^N\|_2,
\end{align*}
\ed{which implies that inequality}
\begin{equation}\label{eq:MathIneq}
\|\sqrt{W}\x^N\|_2 \ed{\overset{\eqref{eq:pr_dual_Lend_of_the_proof_11}+\eqref{eq:func_math_fin}}{\le}} \frac{\e}{R_{\y}}
\end{equation}
\ed{holds together with \eqref{eq:func_math_fin} with probability at least $1-4\delta$.}}
Number of communication rounds is equal to the number of iterations similarly as for Algorithm \ref{Alg:DualNFGM}. The total number of stochastic gradient oracle calls is $\sum_{k=1}^Nr_k$, which gives the bound in the problem statement since $\sum_{k=1}^N\alpha_{k+1}=A_N$.}
\end{proof}

\end{document}